\DeclareSymbolFont{cyrletters}{OT2}{wncyr}{m}{n}
\DeclareMathSymbol{\Sha}{\mathalpha}{cyrletters}{"58}
 \DeclareMathOperator{\GL}{GL}
 \DeclareMathOperator{\wt}{wt}
\DeclareMathOperator{\dep}{dep}
\title{Colored multizeta values in positive characteristic}
\author{Ryotaro Harada}
		\address{
				Tohoku University, Aza-Aoba 6-3 Aramaki, Aoba-ku, Sendai City, Miyagi 9808578, Japan.}
			\email{harada@tohoku.ac.jp}
\date{June 8, 2023}
\newtheorem{thm}{Theorem}[section]
\newtheorem{lem}[thm]{Lemma}
\newtheorem{cor}[thm]{Corollary}
\newtheorem{prop}[thm]{Proposition}
\theoremstyle{remark}
\subjclass[2010]{11M38 (primary), 11J72, 11J93}
\keywords{multizeta values, non-vanishing, sum-shuffle relation, pre-$t$-motive, linear independence}
\theoremstyle{definition}
\newtheorem{defn}[thm]{Definition}
\begin{document}
\bibliographystyle{amsalpha+}

\begin{abstract}
In this paper, we study Shen-Shi's colored multizeta values in positive characteristic,
which are generalizations of multizeta values in positive characteristic by Thakur.
We establish their fundamental properties, that include their non-vanishingness,
sum-shuffle relations, $t$-motivic interpretation and linear independence.
In particular, for the linear independence results, we prove that there are no nontrivial $\overline{k}$-linear relations among the colored multizeta values of different weights.
\end{abstract}


\maketitle
\tableofcontents
\setcounter{section}{-1}

\section{Introduction}
\subsection{Background in positive characteristic}
We set $A:=\mathbb{F}_q[\theta]$ the polynomial ring in the variable $\theta$ over finite field $\mathbb{F}_q$, with quotient field $k:=\mathbb{F}_q(\theta)$, and $A_{+}$ be the set of monic polynomials in $A$.
Let $k_{\infty}$ be the completion of $k$ at the infinite place $\infty$, and $\mathbb{C}_{\infty}$ be the completion of a fixed algebraic closure $\overline{k_{\infty}}$ at $\infty$.
Further we set $k':=\overline{\mathbb{F}}_q(\theta)$ where $\overline{\mathbb{F}}_q$ is an algebraic closure of $\mathbb{F}_q$.
In 2004, Thakur (\cite{T04}) invented and studied {\it multizeta values in positive characteristic} (MZVs). He defined MZVs in the following form for each index $(s_1, \ldots, s_n)\in\mathbb{N}^n$ ($\mathbb{N}$ is the set of positive integers):
\[
	\zeta_A(s_1, \ldots, s_n):=\sum_{\substack{\deg a_1>\cdots >\deg a_n\geq 0\\ a_1, \ldots, a_n\in A_{\plus}}}\frac{1}{a_1^{s_1}\cdots a_n^{s_n}}\in k_{\infty}.
\]
For each index $\mathfrak{s}=(s_1, \ldots, s_n)\in\mathbb{N}^n$ in this paper, we note $\wt(\mathfrak{s}):=s_1+\cdots +s_n$ and $\dep(\mathfrak{s}):=n$.
Thakur showed that (i) MZVs are never zero (\cite{T09}) and (ii) MZVs satisfy certain algebraic relation, so-called the sum shuffle relation (\cite{T10}). Additionally, he and Anderson found that (iii) MZVs are related to some pre-$t$-motives via rigid analytic trivializations (\cite{AT09}). Property (i) is an analogue to the fact that the classical MZVs are nonzero. Grouping the terms by degrees, MZVs are represented with certain power sums $S_{d_i}(s_i)$ (see Proposition \ref{prop,atformula}), and Thakur obtained (i) by showing the inequality among the power sums $S_{d_i}(s_i)$. He clarified (ii) by showing that the products of power sums are written as a $\mathbb{F}_p$-linear combination of power sums. 

Anderson and Thakur found (iii) by creating a series that lifts each MZV to an element of Tate algebra, namely, the deformation series of the MZV by Anderson-Thakur polynomials. The series satisfies a relation under the Frobenius $(-1)$-twist, and then they succeeded in interpreting MZVs by constructing a suitable pre-$t$-motive and its rigid analytic trivialization. The properties (i)--(iii) are analogous to those of the classical multizeta values; then, by combining them with the Anderson-Brownawell-Papanikolas' linear-independence criterion (ABP-criterion in short), one can obtain a transcendence/linear independence result that is far beyond the classical case. For example, Chang showed a linear independence among the monomials of the MZVs and consequently proved the stronger form of direct sum conjecture in positive characteristic:
 \begin{thm}[\cite{C14}] 
There are no nontrivial $\overline{k}$-linear relations among the MZVs whose weights are different from each other.
 \end{thm}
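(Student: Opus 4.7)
The plan is to combine the $t$-motivic realization (iii) of Anderson-Thakur with the Anderson-Brownawell-Papanikolas linear independence criterion, following the strategy pioneered by Chang. For each index $\mathfrak{s}=(s_1,\ldots,s_n)$, Anderson and Thakur attach a pre-$t$-motive $M_{\mathfrak{s}}$ of ``weight'' $\wt(\mathfrak{s})$ with an explicit rigid-analytic trivialization; its specialization at $t=\theta$ produces, up to an explicit nonzero factor, the value $\widetilde{\pi}^{\,\wt(\mathfrak{s})}\zeta_A(\mathfrak{s})$ as an entry of the period matrix. The deformation series $\mathcal{L}_{\mathfrak{s}}(t)$ satisfies a Frobenius difference equation over $\overline{k}(t)$ that encodes this structure, and it is on this level that ABP operates.

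Suppose for contradiction a nontrivial relation $\sum_{j=1}^{r}\alpha_j\zeta_A(\mathfrak{s}_j)=0$ holds with $\alpha_j\in\overline{k}^{\times}$ and pairwise distinct weights $w_j=\wt(\mathfrak{s}_j)$. I would assemble the $M_{\mathfrak{s}_j}$, each tensored with an appropriate Carlitz tensor power so as to homogenize the weight to a common $W\ge \max_j w_j$, into a single pre-$t$-motive $M$. Multiplying the hypothesized relation by $\widetilde{\pi}^{\,W}$ produces a $\overline{k}$-linear dependence among entries of the period matrix of $M$, together with the transcendental quantities $\widetilde{\pi}^{\,W-w_j}$ that come from the Carlitz tensor factors.

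At this point the ABP criterion applies: such a dependence among periods forces the existence of a Frobenius-coboundary at the level of the deformation series $\mathcal{L}_{\mathfrak{s}_j}(t)$, lifting the relation from a point evaluation to a functional identity in the Tate algebra. The crux is then a weight/grading argument in the Tannakian category of pre-$t$-motives: the Carlitz motive carries a natural $\mathbb{G}_m$-grading, and the motivic Galois group of each $M_{\mathfrak{s}_j}$ projects surjectively onto the $w_j$-th power character of this $\mathbb{G}_m$. Because the characters $w_1,\ldots,w_r$ are pairwise distinct, the coboundary splits into independent pieces, one per weight; each piece collapses to a relation of the form $\alpha_j\zeta_A(\mathfrak{s}_j)=0$ for a single $j$. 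Combined with the non-vanishing property (i), each such piece forces $\alpha_j=0$, contradicting nontriviality.

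I expect the main technical obstacle to be the clean execution of this last weight decomposition. Concretely, one must verify that when a $\overline{k}$-linear dependence is pushed through ABP to a Frobenius-coboundary among the Carlitz-normalized series $\widetilde{\pi}^{-w_j}\mathcal{L}_{\mathfrak{s}_j}(t)$, the $\mathbb{G}_m$-grading still separates the coboundary term-by-term in $j$; equivalently, one needs to rule out nontrivial ``cross-weight'' $\sigma$-coboundaries between the block of weight $w_i$ and the block of weight $w_j$ for $i\ne j$. Establishing this isolation — which ultimately rests on the transcendence of $\widetilde{\pi}$ and the rigidity of the difference Galois groups of Carlitz tensor powers — is the heart of the argument; once it is in place, property (i) closes the contradiction immediately.
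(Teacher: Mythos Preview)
Your framework is right and matches the paper's: assemble the Anderson--Thakur pre-$t$-motives, homogenize weights by tensoring with Carlitz powers $C^{\otimes(W-w_j)}$ (equivalently, pad each column $\psi_j$ by $\Omega^{W-w_j}$), and apply the refined ABP criterion to lift the assumed $\overline{k}$-relation at $t=\theta$ to a functional relation $\mathbf{F}\tilde{\psi}=0$ over $\overline{k}[t]$.

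The gap you flag is real, and your proposed fix --- a Tannakian $\mathbb{G}_m$-grading argument separating the weight characters on the difference-Galois side --- is not how the paper (following \cite{C14}) proceeds. The actual weight-separation device is concrete and requires no Galois-group computation: one specializes the functional relation $\mathbf{F}\tilde{\psi}=0$ not at $t=\theta$ but at $t=\theta^{q^N}$ for $N$ large. Since $\Omega$ has a simple zero at every $t=\theta^{q^i}$, the padding factor $\Omega^{W-w_j}$ vanishes there whenever $w_j<W$, so every sub-top-weight block is annihilated. For the surviving top-weight block, Lemma~\ref{cmzproperty}(iv) computes the specialization explicitly as $\psi_j(\theta^{q^N})=(0,\ldots,0,\;a_jc_j^N(b_j\zeta_A(\mathfrak{s}_j)/\tilde{\pi}^{W})^{q^N})^{\mathrm{tr}}$. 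Extracting $q^N$-th roots, one obtains a nontrivial $\overline{k}$-relation among the MZVs of the \emph{single} top weight alone. For the statement at hand each weight class is a singleton, so this relation reads $\alpha\cdot\zeta_A(\mathfrak{s})=0$ and contradicts non-vanishing~(i), exactly as you anticipated; in the paper's more general theorem one then iterates via the difference equation $(\mathbf{F}'-\mathbf{F}'^{(-r)}\Phi)\psi=0$ (Lemma~\ref{laslem}) to descend the single-weight relation from $\overline{k}$ to $\overline{\mathbb{F}}_q(\theta)$.

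So the missing idea is the $t=\theta^{q^N}$ specialization, which replaces your Tannakian heuristic by a direct vanishing argument on the zero locus of $\Omega$. Your route is plausible in spirit via Papanikolas's theory, but it would require computing enough of the motivic Galois group of the direct sum to see the grading, which is strictly harder than the trick \cite{C14} and the present paper use.
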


In this paper, we aim to generalize the above results to the case of Shen-Shi's variants of MZVs, which we call {\it colored multizeta values in positive characteristic} (CMZVs). They are described as follows: for given $\mathfrak{s}=(s_1, \ldots, s_n)\in\mathbb{N}^n$ and ${\boldsymbol \xi}=(\xi_1, \ldots, \xi_n)\in(\overline{\mathbb{F}}_q^{\times})^n$ where each $\xi_i$ is the $m$-th root of unity, we define the CMZVs by
\[
	\zeta_{A}(\mathfrak{s}; \boldsymbol{\xi}):=\sum_{\substack{\deg a_1>\cdots >\deg a_n\\ a_1, \ldots, a_n\in A_+}}\frac{\xi_1^{\deg a_1}\cdots \xi_n^{\deg a_n}}{a_1^{s_1}\cdots a_n^{s_n}}\in \overline{\mathbb{F}}_q((\frac{1}{\theta})).
\]
CMZVs are specialized to the MZVs and alternating multizeta values in positive characteristic (AMZVs) by \cite{H21}, when $N=1$ and $N=q-1$, respectively. Furthermore, we can generalize the four properties mentioned above to the CMZV case.

We can express each CMZV by power sums twisted by roots of unity \eqref{power series expression}, and thus, we can show the non-vanishingness of CMZVs by Thakur's inequality for the degree of the power sums (\cite{T09}):
\begin{thm}[Theorem \ref{nonvan}]
For any ${\mathfrak s}=(s_1, \ldots, s_n)\in\mathbb{N}^n$ and ${\boldsymbol \xi}=(\xi_1, \ldots, \xi_n) \in{(\overline{\mathbb{F}}_q^{\times})}^{n}$, $\zeta_A(\mathfrak{s};\boldsymbol{\xi})$ is not zero.
\end{thm}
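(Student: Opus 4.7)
The plan is to adapt Thakur's non-vanishing argument for MZVs from [T09] to the coloured setting, exploiting the fact that the twists by roots of unity are units with respect to the infinite-adic valuation and therefore cannot interfere with a dominance argument on power sums. Writing $S_d(s) := \sum_{a \in A_+,\ \deg a = d} 1/a^s$, I would first decompose the defining sum by degree tuples as
\[
\zeta_A(\mathfrak{s}; \boldsymbol{\xi}) = \sum_{d_1 > \cdots > d_n \geq 0} \xi_1^{d_1} \cdots \xi_n^{d_n} \, S_{d_1}(s_1) \cdots S_{d_n}(s_n),
\]
which is the power-series expression singled out in the paragraph preceding the theorem statement.

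The key observation is that each $\xi_i \in \overline{\mathbb{F}}_q^\times$ is a unit of the coefficient field $\overline{\mathbb{F}}_q((1/\theta))$, so for every tuple $(d_1, \ldots, d_n)$ one has
\[
v_\infty\bigl(\xi_1^{d_1}\cdots\xi_n^{d_n}\, S_{d_1}(s_1)\cdots S_{d_n}(s_n)\bigr) = v_\infty\bigl(S_{d_1}(s_1)\cdots S_{d_n}(s_n)\bigr).
\]
The second ingredient is Thakur's inequality from [T09]: there is a unique tuple $(d_1^\ast, \ldots, d_n^\ast)$ with $d_1^\ast > \cdots > d_n^\ast \geq 0$ minimising the right-hand valuation, and this minimum is strictly separated from the valuation of every other admissible summand.

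Combining the two facts, the summand indexed by $(d_1^\ast, \ldots, d_n^\ast)$ has strictly smaller $v_\infty$ than any competitor and so survives cancellation in the sum; its leading $\overline{\mathbb{F}}_q^\times$-coefficient is the non-zero leading coefficient of $S_{d_1^\ast}(s_1)\cdots S_{d_n^\ast}(s_n)$ multiplied by the unit $\xi_1^{d_1^\ast}\cdots\xi_n^{d_n^\ast}$, hence is non-zero. It follows that $v_\infty\bigl(\zeta_A(\mathfrak{s}; \boldsymbol{\xi})\bigr)$ is finite and therefore $\zeta_A(\mathfrak{s}; \boldsymbol{\xi}) \neq 0$.

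The only non-trivial ingredient is the inequality of [T09]; everything else is the remark that root-of-unity twists are $v_\infty$-units, so the coloured case bootstraps cleanly from the uncoloured one. The main obstacle, such as it is, will be simply citing Thakur's estimate in precisely the form needed; no new analytic input on power sums is required.
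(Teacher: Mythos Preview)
Your proposal is correct and follows essentially the same route as the paper: both decompose $\zeta_A(\mathfrak{s};\boldsymbol{\xi})$ into twisted power sums, observe that the $\xi_i$ are $|\cdot|_\infty$-units, and invoke Thakur's inequality $\deg_\theta S_d(k)>\deg_\theta S_{d+1}(k)$ from [T09] to isolate a strictly dominant term. The only cosmetic difference is that the paper names the minimizing tuple explicitly as $(n-1,n-2,\ldots,0)$ and records $|\zeta_A(\mathfrak{s};\boldsymbol{\xi})|_\infty=|S_{n-1}(s_1)\cdots S_0(s_n)|_\infty>0$, whereas you assert existence of a unique minimizer abstractly.
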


The power sum expression is also used to show algebraic relations for CMZVs. We can show that the product of the twisted power sums is again written by the $\mathbb{F}_p$-linear combination of the twisted power sums in Theorem \ref{thm:sumshuffle}. As a consequence, we obtain the sum-shuffle relation for CMZVs: 
\begin{thm}[Corollary \ref{cor:sumshufflecmzv}]
For ${\mathfrak a }:=(a_1, \ldots, a_r)\in\mathbb{N}^{r}$, ${\mathfrak b}:=(b_1, \ldots, b_s)\in\mathbb{N}^{s}$, ${\boldsymbol \epsilon}:=(\epsilon_1, \ldots, \epsilon_r)\in(\overline{\mathbb{F}}_q^{\times})^{r}$ and ${\boldsymbol \lambda}:=(\lambda_1, \ldots, \lambda_s)\in(\overline{\mathbb{F}}_q^{\times})^{s}$, we may express the product $\zeta_A({\mathfrak a }; {\boldsymbol \epsilon})\zeta_A({\mathfrak b} ;{\boldsymbol \lambda})$ as follows:
\begin{align}\label{shamzv}
	\zeta_{A}({\mathfrak a};{\boldsymbol \epsilon})\zeta_{A}({\mathfrak b};{\boldsymbol \lambda})=\sum_{i} f_i\zeta_A(c_{i1}, \ldots, c_{il_{i}}; \mu_{i1}, \ldots, \mu_{il_{i}})
\end{align}
for some $c_{ij}\in\mathbb{N}$, $\mu_{ij}\in\overline{\mathbb{F}}_q^{\times}$ so that $\sum_{m=1}^{r}a_m+\sum_{n=1}^{s}b_n=\sum_{h=1}^{l_i}c_{ih}$, $\prod_{m=1}^{r}\epsilon_m\prod_{n=1}^{s}\lambda_n=\prod_{h=1}^{l_i}\mu_{ih}$, $l_i\leq r+s$ and $f_i\in\mathbb{F}_p$ for each $i$. 
\end{thm}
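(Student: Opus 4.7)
The strategy is to reduce the identity to Theorem \ref{thm:sumshuffle} via the power sum expression \eqref{power series expression}. First, I would write each factor $\zeta_A(\mathfrak{a};\boldsymbol{\epsilon})$ and $\zeta_A(\mathfrak{b};\boldsymbol{\lambda})$ as a nested sum of products of twisted power sums $\epsilon_i^{d_i} S_{d_i}(a_i)$ and $\lambda_j^{e_j} S_{e_j}(b_j)$ indexed by strictly decreasing tuples $d_1 > \cdots > d_r \geq 0$ and $e_1 > \cdots > e_s \geq 0$, respectively. Multiplying the two expressions yields a double sum over pairs of such tuples.

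Next, I partition this double sum by the merging pattern of the two index tuples: each pattern is an ordered arrangement of $\{d_i\}_i$ and $\{e_j\}_j$, possibly identifying some $d_i$ with some $e_j$. For patterns without identifications (the pure shuffles), the corresponding nested sums already have the form of a CMZV of depth $r+s$, weight $\sum a_m + \sum b_n$, and character $\prod \epsilon_m \prod \lambda_n$, with $\mathbb{F}_p$-coefficient equal to $1$. For patterns with an identification $d_i = e_j = d$, the summand at that position carries a factor $\epsilon_i^d \lambda_j^d \cdot S_d(a_i) S_d(b_j)$, to which I apply Theorem \ref{thm:sumshuffle}: this rewrites the power-sum product as an $\mathbb{F}_p$-linear combination of single twisted power sums of the form $(\epsilon_i \lambda_j)^d S_d(c)$, thereby reducing the depth by one at that position. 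Reassembling the nested sums then exhibits the whole product as a finite $\mathbb{F}_p$-linear combination of CMZVs of depth $l_i \leq r+s$ with the required weight and character constraints.

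The main obstacle is the combinatorial bookkeeping, especially verifying that the strict-inequality conditions among successive degrees are preserved when a merging pattern is expanded and when Theorem \ref{thm:sumshuffle} collapses two adjacent equal-degree positions into one. I would handle this by induction on $r+s$: the base case $r=s=1$ is a direct restatement of Theorem \ref{thm:sumshuffle}, while the inductive step isolates the largest-degree summand in each of the two nested sums, splits into the three cases $d_1 > e_1$, $d_1 < e_1$, $d_1 = e_1$, applies Theorem \ref{thm:sumshuffle} in the equality case, and reduces the remainder to smaller instances of the corollary. Checking along the way that weights add and characters multiply gives the constraints $\sum a_m + \sum b_n = \sum c_{ih}$ and $\prod \epsilon_m \prod \lambda_n = \prod \mu_{ih}$ claimed in \eqref{shamzv}.
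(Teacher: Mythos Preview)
The paper's argument is far shorter than yours: having established the power-sum identity $S_d(\mathfrak{a};\boldsymbol{\xi})\,S_d(\mathfrak{b};\boldsymbol{\lambda})=\sum_i f_i\,S_d(\mathfrak{c}_i;\boldsymbol{\mu}_i)$ in Theorem~\ref{thm:sumshuffle} (whose inductive proof on depth is deferred to \cite{H21}), the corollary is obtained in a single sentence by observing that the $f_i$ are independent of $d$ and summing both sides of \eqref{shapowsum} over all $d\ge 0$. All of the interleaving combinatorics you propose to carry out at the $\zeta$-level is thus treated as already packaged into Theorem~\ref{thm:sumshuffle}; your plan in effect reproves a portion of that theorem rather than simply deducing the corollary from it.

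Your route can be made to work, but two points need care. First, applying Theorem~\ref{thm:sumshuffle} (or already Proposition~\ref{chen}) at a collision $d_i=e_j$ does \emph{not} return only depth-$1$ terms $(\epsilon_i\lambda_j)^dS_d(c)$ as you write: even the depth $1\times 1$ case produces depth-$2$ outputs $S_d(s_1{+}s_2{-}j,\,j;\,\epsilon_i\lambda_j,1)$, so each collision inserts a new running index that must itself be interleaved with the remaining $d_k$'s and $e_l$'s. Second, your induction on $r+s$ does not literally reduce to ``smaller instances of the corollary'': in the case $d_1>e_1$, after peeling off $S_{d_1}(a_1;\epsilon_1)$ you are left with a product $S_{<d_1}(a_2,\dots,a_r;\epsilon_2,\dots,\epsilon_r)\cdot S_{<d_1}(\mathfrak{b};\boldsymbol{\lambda})$ of \emph{truncated} sums, not of CMZVs, so the corollary itself is too weak an inductive hypothesis. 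The clean fix is to carry the induction at the level of $S_d$ (equivalently $S_{<d}$) uniformly in $d$ --- which is exactly the content of Theorem~\ref{thm:sumshuffle}, and the reason the paper can dispense with your separate induction.
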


For the $t$-motivic interpretation of CMZVs, unlike the (A) MZV case, we need to use a generalization of the pre-$t$-motive introduced in \cite{CPY11}, which we recall in Definition \ref{defn:higher level pre t-motive}. 
In \cite{CPY11}, they investigated Papanikolas theory for higher-level pre-$t$-motives together with a rigid analytic trivialization. The $t$-motivic interpretation of CMZVs is stated by the following theorem: 

\begin{thm}[Theorem \ref{rat}]
We set $\mathfrak{s}=(s_1, \ldots, s_n)\in\mathbb{N}^n$ and $\boldsymbol{\xi}=(\xi_1, \ldots, \xi_n)\in(\overline{\mathbb{F}}_q^{\times})^n$
Let $\zeta_A(\mathfrak{s}; \boldsymbol{\xi})$ be a CMZV of level $m$ so that $\xi_1, \ldots, \xi_n\in(\mathbb{F}_{q^r}^{\times})$ and $M$ be a pre-$t$-motive of level $m$, which is defined by the matrix \eqref{motive:repmatrix}. Then, $\zeta_A(\mathfrak{s}; \boldsymbol{\xi})$ appears as an entry of rigid analytic trivialization of $M$ with the specialization $t=\theta$.
\end{thm}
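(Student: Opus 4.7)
The plan is to follow the Anderson–Thakur deformation‑series strategy, adapted to the higher‑level Papanikolas framework of \cite{CPY11} in order to accommodate the twisting by roots of unity in CMZVs. Fix $r$ such that $\xi_1,\dots,\xi_n\in\mathbb{F}_{q^r}^{\times}$, so that each $\xi_i$ is fixed by the $q^r$‑power Frobenius; this is the level at which the pre‑$t$‑motive $M$ and its representing matrix $\Phi$ from \eqref{motive:repmatrix} naturally live.

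The first step is to construct a deformation series $\mathfrak{L}_{\mathfrak{s},\boldsymbol{\xi}}(t)$ in an appropriate Tate algebra whose specialization at $t=\theta$ recovers $\zeta_A(\mathfrak{s};\boldsymbol{\xi})$ up to the usual Carlitz period and Anderson–Thakur factorial factors. Its definition mirrors the iterated sum defining the CMZV: group the summands by multidegrees $d_1>\cdots>d_n\geq 0$, replace each reciprocal $1/a_i^{s_i}$ by the Anderson–Thakur deformation of the power sum $S_{d_i}(s_i)$, and insert the twist factors $\xi_i^{d_i}$. I would then verify entireness of the resulting series and compute its specialization at $t=\theta$ via the twisted analogue of the power‑sum formula recalled in Proposition \ref{prop,atformula}.

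The second step is the $\sigma^r$‑difference equation. A direct calculation with the defining recursion of the Anderson–Thakur polynomials, combined with the identity $\xi_i^{q^r}=\xi_i$, yields a relation of the form
\[
\mathfrak{L}_{\mathfrak{s},\boldsymbol{\xi}}^{(-r)} \;=\; (t-\theta)^{s_1+\cdots+s_n}\,\mathfrak{L}_{\mathfrak{s},\boldsymbol{\xi}} \;+\; (\text{corrections from tail sub‑indices}),
\]
in which the correction terms are exactly the analogous deformation series attached to $(s_2,\dots,s_n;\xi_2,\dots,\xi_n)$, $(s_3,\dots,s_n;\xi_3,\dots,\xi_n)$, and so on. Assembling these recursions into matrix form reproduces the upper‑triangular matrix $\Phi$ of \eqref{motive:repmatrix}, and the matrix $\Psi$ whose entries are the various sub‑index deformation series then satisfies $\Psi^{(-r)}=\Phi\Psi$. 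Hence $\Psi$ is a rigid analytic trivialization of $M$, and $\zeta_A(\mathfrak{s};\boldsymbol{\xi})$ appears, up to explicit nonzero constants, as an entry of $\Psi|_{t=\theta}$.

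The main obstacle I anticipate is the bookkeeping around the level $r$: unlike the MZV case, where a single Frobenius twist suffices, here one must match the order of the twist with the field of definition of the $\xi_i$ and ensure that the factors $\xi_i^{\deg a_i}$ interact correctly with the $r$‑fold Frobenius action on the Anderson–Thakur polynomials, so that the correction terms on the right‑hand side of the difference equation are genuinely CMZV deformation series at the advertised sub‑indices. Once this precise shape of the recursion is in hand, the standard \cite{CPY11} machinery for rigid analytic trivializations of higher‑level pre‑$t$‑motives applies directly and yields the claim.
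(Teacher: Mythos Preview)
Your overall strategy matches the paper's: build the deformation series $L(\mathfrak{s};\boldsymbol{\xi})$ (your $\mathfrak{L}$), compute its $(-r)$-twist, and package the resulting recursions into $\Psi^{(-r)}=\Phi\Psi$. You also correctly flag the level-$r$ bookkeeping as the crux. But there is one concrete device you are missing, and without it your Step~2 will not close. When you apply the $(-r)$-twist to $L(s_j,\ldots,s_{i-1};\xi_j,\ldots,\xi_{i-1})$, the factors $\xi_l^{d_l}$ contribute an overall multiplier $(\xi_j\cdots\xi_{i-1})^{r}$ on the right-hand side (this is the content of the lemma immediately preceding the theorem in the paper). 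If the $(i,j)$-entry of $\Psi$ is simply $L(s_j,\ldots,s_{i-1})\Omega^{s_i+\cdots+s_n}$, then $(\Psi_{ij})^{(-r)}$ carries a scalar depending on \emph{both} $i$ and $j$, and no fixed matrix $\Phi$ can reproduce this via $\Phi\Psi$, since the $i$th row of $\Phi$ must be independent of the column index $j$.

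The paper's remedy is to introduce auxiliary constants $\mu_l\in\overline{\mathbb{F}}_q^{\times}$ satisfying $\mu_l^{q^r-1}=\xi_l^{r}$, so that $\mu_l^{(-r)}=\xi_l^{-r}\mu_l$, and to normalize the $(i,j)$-entry of $\Psi$ by the product $\mu_j\cdots\mu_{i-1}$. Under the $(-r)$-twist, the factor $(\mu_j\cdots\mu_{i-1})^{(-r)}=(\xi_j\cdots\xi_{i-1})^{-r}\mu_j\cdots\mu_{i-1}$ exactly cancels the unwanted $(\xi_j\cdots\xi_{i-1})^{r}$, and the surviving $\mu$'s split multiplicatively between $\Phi$ and $\Psi$. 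This is precisely the point where the MZV/AMZV argument (level $1$) no longer suffices and a new normalization is required; your proposal anticipates an obstacle here but does not supply this fix. Two minor corrections: $\Phi$ in \eqref{motive:repmatrix} is lower-triangular, not upper-triangular; and the sub-series appearing in the recursion are the \emph{initial} truncations $L(s_1,\ldots,s_i)$ rather than the tails $(s_2,\ldots,s_n)$, because the $(-r)$-twist splits the sum according to which of the $d_l$ drop below $r$.
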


This theorem enable us to apply Chang's refined ABP-criterion (\cite{C09}) for CMZVs and we obtain the following theorem.
\begin{thm}[Theorem \ref{linindcmz}]
Let $w_1, \ldots, w_l\in\mathbb{N}$ be distinct. We suppose that $V_i$ is a $k'$-linearly independent subset of the set of monomials of CMZVs with total weight $w_i$ for $i=1, \ldots, l$. Then, the following union
\[
    \{ 1 \}\cup\bigcup_{i=1}^{l}V_{i}
\]
is a linearly independent set over $\overline{k}$, that is, there are no nontrivial $\overline{k}$-linear relations among the elements of $\{ 1 \}\cup\bigcup_{i=1}^{l}V_{i}$.
\end{thm}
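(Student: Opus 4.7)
The plan is to adapt Chang's proof of the MZV analogue \cite{C14} to the CMZV setting, using the $t$-motivic interpretation of CMZVs (Theorem \ref{rat}) together with the refined ABP-criterion of \cite{C09}, both lifted to the higher-level pre-$t$-motive framework of \cite{CPY11}. The heart of the argument is that the refined criterion is strong enough to separate any hypothetical $\overline{k}$-linear relation into its weight-homogeneous pieces; the within-weight hypothesis on each $V_i$ then closes out the proof.

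First I would assemble the ingredients for the criterion. For each monomial $v\in\bigcup_i V_i$, Theorem \ref{rat} (applied to every CMZV factor and combined via tensor products of the corresponding level-$m$ pre-$t$-motives) produces a level-$m$ pre-$t$-motive whose rigid analytic trivialization has an entry specializing to $v$ at $t=\theta$, up to a power of the Carlitz-type period recording $\wt(v)$. Taking the direct sum over all these monomials, together with the trivial pre-$t$-motive accounting for the constant $1$, I obtain a single level-$m$ pre-$t$-motive $M$ whose rigid analytic trivialization matrix $\Psi$ carries $1$ and every monomial from $\bigcup_i V_i$ among its entries.

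Next, suppose for contradiction that a nontrivial $\overline{k}$-linear relation
\[
c_0 + \sum_{i=1}^{l}\sum_{v\in V_i} c_v\, v = 0
\]
holds. Feeding this relation into the higher-level refined ABP-criterion \cite{C09,CPY11} lifts it to an identity among the deformation series in the Tate algebra. Because the Carlitz-type period factor attached to the weight-$w_i$ block transforms distinctly under the Frobenius twist and the weights $w_1,\ldots,w_l$ are distinct, this identity must split into $l+1$ independent pieces, one for each weight (plus one for the constant term). Specializing the weight-$w_i$ piece at $t=\theta$ and dividing by the corresponding period power yields a $k'$-linear relation purely among the elements of $V_i$. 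The assumed $k'$-linear independence of each $V_i$ then forces $c_v=0$ for every $v\in V_i$ and every $i$, after which $c_0=0$ follows as well, contradicting nontriviality.

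The main obstacle will be rigorously justifying the weight-separation step in the higher-level setting: Chang's original refined criterion is formulated for level-one pre-$t$-motives, and one must verify that the same conclusion — that a relation across distinct weights decomposes into independent relations within each weight — persists in the level-$m$ framework of \cite{CPY11}. Either one transcribes the proof of the refined ABP-criterion directly into the level-$m$ Tannakian formalism, or one inflates the level-$m$ motive to a level-one object and applies the existing criterion before descending. Tracking how the roots of unity $\boldsymbol{\xi}$ interact with the Frobenius twist — in particular, controlling the $\sigma^m$-equivariance of the deformation series — will require the most care.
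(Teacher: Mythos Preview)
Your high-level strategy matches the paper's, but two concrete steps are misidentified or missing.

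First, the weight-separation mechanism is not the Frobenius-twist heuristic you describe. The paper normalizes by multiplying the weight-$w_i$ block of $\psi$ by $\Omega^{w_l-w_i}$ (so that every block carries the same period power $\tilde{\pi}^{-w_l}$ at $t=\theta$), lifts the assumed relation via Theorem~\ref{c09thm} to $\mathbf{F}\tilde\psi=0$ with $\mathbf{F}\in\overline{k}[t]$, and then \emph{evaluates at $t=\theta^{q^N}$} for large $N$. Since $\Omega(\theta^{q^N})=0$, every block with $i<l$ dies; by Lemma~\ref{cmzproperty}(iv) what survives is a nontrivial $\overline{k}$-relation among the $(b_{lj}Z_{lj})^{q^N}$, and taking $q^N$-th roots gives a $\overline{k}$-relation inside $V_l$ alone. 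This isolates only the \emph{top} weight; induction on $l$ disposes of the case where the putative relation avoids $V_l$. No simultaneous splitting into $l+1$ pieces ever occurs.

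Second, and more seriously, you assert that the separated piece, once specialized, is a $k'$-linear relation. The refined ABP criterion only returns coefficients in $\overline{k}[t]$, so specialization yields a $\overline{k}$-relation, not a $k'$-one; your contradiction would not fire, since the hypothesis on $V_i$ is $k'$-independence. The descent $\overline{k}\rightsquigarrow k'$ within a single weight is a separate key lemma (Lemma~\ref{laslem} in the Appendix): one normalizes the lifted row vector so that one coordinate equals $1$, forms $\mathbf{F}'-\mathbf{F}'^{(-r)}\Phi$, and argues via another evaluation at $t=\theta^{q^N}$ that the surviving coordinate functions satisfy $f'^{(-r)}_{jd_j}=f'_{jd_j}$, hence lie in $\mathbb{F}_{q^r}(t)$ and specialize into $k'$. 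This step is the one that genuinely exploits the interaction of the roots of unity with the level-$r$ twist (through the constants $a_j,c_j$ of Lemma~\ref{cmzproperty}), whereas your stated main obstacle---porting the criterion itself to level $r$---is a non-issue here: Theorem~\ref{c09thm} is already formulated for arbitrary level.
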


We denote $\overline{\mathcal{CZ}}$ (resp. $\overline{\mathcal{CZ}}_w$) to be the $\overline{k}$-linear space generated by the CMZVs (resp. CMZVs of weight $w$). 
As a consequence, we prove that there are no nontrivial $\overline{k}$-linear relations among CMZVs with different weights. 
 \begin{cor}[Corollary \ref{cordecom}]
  For the distinct positive integers $w_1, w_2, \ldots$, we have the following:
$$	
	\overline{\mathcal{CZ}}=\overline{k}\oplus\bigoplus_{i\geq 1}\overline{\mathcal{CZ}}_{w_i}.
$$	
 \end{cor}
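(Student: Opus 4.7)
My plan is to deduce the direct sum decomposition directly from the preceding Theorem \ref{linindcmz}. The equality
\[
\overline{\mathcal{CZ}} = \overline{k} + \sum_{i\geq 1} \overline{\mathcal{CZ}}_{w_i}
\]
of $\overline{k}$-subspaces (not yet asserted to be direct) is essentially tautological: every CMZV carries a well-defined positive-integer weight, so the $\overline{\mathcal{CZ}}_{w_i}$ together span $\overline{\mathcal{CZ}}$, and $\overline{k}$ appears naturally as the weight-zero piece. The substance of the corollary is therefore the directness assertion.

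To check directness, I would suppose a relation $c_0 + c_1 + \cdots + c_N = 0$ with $c_0 \in \overline{k}$ and $c_i \in \overline{\mathcal{CZ}}_{w_i}$ for pairwise distinct positive integers $w_1, \ldots, w_N$, and show that each summand vanishes. By definition, each $c_i$ is a finite $\overline{k}$-linear combination of CMZVs of weight $w_i$. The finitely many CMZVs occurring in $c_i$ span a finite-dimensional $k'$-subspace of $\overline{\mathcal{CZ}}_{w_i}$; I would pick a $k'$-linearly independent subset $V_i$ of them with the same $k'$-span, rewrite each occurring CMZV as a $k'$-linear combination of elements of $V_i$, and absorb those $k'$-coefficients into the outer $\overline{k}$-coefficients. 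This yields an expression
\[
c_i = \sum_{v \in V_i} b_{iv}\, v, \qquad b_{iv} \in \overline{k}.
\]

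Since each CMZV is itself a depth-one monomial in CMZVs of the same total weight, each $V_i$ is a $k'$-linearly independent subset of the set of monomials of CMZVs with total weight $w_i$, and the weights $w_1,\ldots,w_N$ are distinct. Theorem \ref{linindcmz} then guarantees that
\[
\{1\} \cup V_1 \cup \cdots \cup V_N
\]
is $\overline{k}$-linearly independent. Rewriting the hypothesised relation in terms of this set gives $c_0 \cdot 1 + \sum_{i}\sum_{v \in V_i} b_{iv}\, v = 0$, which forces $c_0 = 0$ and every $b_{iv} = 0$, whence each $c_i = 0$. Directness follows.

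I do not anticipate a serious obstacle here, since the corollary is essentially a weight-graded reformulation of Theorem \ref{linindcmz}. The only step requiring a moment of care is the extraction of the $k'$-linearly independent subset $V_i$ from the CMZVs actually appearing in $c_i$, but this is a routine finite-dimensional linear algebra manoeuvre inside the finite-dimensional $k'$-span of those CMZVs, and it is precisely what converts a relation in $\overline{\mathcal{CZ}}$ into one to which Theorem \ref{linindcmz} applies.
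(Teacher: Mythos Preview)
Your proposal is correct and matches the paper's approach: the paper states Corollary \ref{cordecom} as an immediate consequence of Theorem \ref{linindcmz} without supplying a separate argument, and your write-up spells out precisely the routine deduction one has in mind (extracting a $k'$-independent $V_i \subset CZ_{w_i}$ from the finitely many CMZVs involved and then invoking the $\overline{k}$-linear independence of $\{1\}\cup\bigcup_i V_i$).
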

 
In the recent studies around MZVs, remarkable achievements are made by Chang-Chen-Mishiba (\cite{CCM22}) and Im-Kim-Le-Ngo Dac-Pham (\cite{IKLNDP22}). They determined the dimension and basis of the $k$-linear space generated by MZVs, namely, they solved both Zagier conjecture and Hoffman conjecture in positive characteristic. Moreover, \cite{CCM22} obtained the upper bound of the dimension in the case for $v$-adic multizeta values and \cite{IKLNDP22} determined the dimension of the $k$-linear space generated by the AMZVs. Recently, Im-Kim-Le-Ngo Dac-Pham also constructed the Hopf algebra structure for the MZVs and AMZVs in \cite{IKLNDP23a, IKLNDP23b}. One can consider the generalization of these results for AMZVs to the case of CMZVs but they are still open problems. We hope to address these issues in the near future.

For the algebraic independence of CMZVs, we do not yet fully develop it. But there is a work for CMZVs \cite{Y23} by Yeo, where he gave a $t$-motivic interpretation in a different way from our Theorem \ref{rat} and he proved the algebraic independence of AMZVs as an application combining Papanikolas' theory \cite{P08}. 

\subsection{Background in characteristic 0}

We set $N$ to be a positive integer. 
In the characteristic 0 case, the level $N$ colored multizeta values are defined as the following series for an index $(s_1, \ldots, s_n)\in\mathbb{N}^n$ of positive integers, and an index $(\xi_1, \ldots, \xi_n)\in(\mathbb{C}^{\times})^n$ of the $N$th root of unities $\mu_i\ (i=1, \ldots, n)$ satisfying $(s_1, \xi_1)\neq (1,1)$:
$$ 
	\zeta(\mathfrak{s}; \boldsymbol{\xi}):=\sum_{m_1>\cdots >m_n>0}\frac{\xi_1^{m_1}\cdots \xi_n^{m_n}}{m_1^{s_1}\cdots m_n^{s_n}}.
$$
This definition shifts to that of the multizeta values when $N=1$. 
Similar to the multizeta values, there are studies of the extended double shuffle relations (\cite{AK04}), connections to motive (\cite{D10, DG05, Gl16}). 
In studies on linear independence, Deligne and Goncharov (\cite{DG05}) obtained the upper bound for the dimension of $\mathbb{Q}$-linear space generated by the colored multizeta values of fixed weight and level. See also \cite{D10, Gl16} for more results in particular about specific level case. However, the direct sum conjecture for (colored) multizeta values (cf. \cite{Z94, G97}) is also an open problem, and, to date, there are no even partial results. 

The outline of this paper is as follows. 
In Section 1, we outline the fundamental notations and deformation series of the CMZVs. 
In Sections 2, we prove the non-vanishingness of the CMZVs and present their sum-shuffle relations.
In Section 3, we first recall the higher level pre-$t$-motive and its rigid analytic trivialization. After that, we prove Theorem \ref{rat} by constructing a pre$t$-motive so that the CMZVs are represented via rigid analytic trivialization.
In Section 4 we conclude that the positive characteristic analogue of the direct sum conjecture for CMZV is proven by using the results in the last four sections. The idea of the proof is similar to those of the (A)MZV case by \cite{C14} and \cite{H21}; thus, we give a sketch in Section 4 and leave the detailed proof in the Appendix. 

\section{Preliminaries}
\subsection{Notations}
\label{No}
We use the following notations.

\begin{itemize}
\setlength{\leftskip}{1.0cm}
\item[$\mathbb{N}=$] the set of positive integers.
\item[$q=$] a power of a prime number $p$.  
\item[$\mathbb{F}_q=$] a finite field with $q$ elements.
\item[$\theta$, $t=$] independent variables.
\item[$A=$] the polynomial ring $\mathbb{F}_q[\theta]$.
\item[$A_{+}=$] the set of monic polynomials in $A$.
\item[$A_{d+}=$] the set of elements in $A_{+}$ of degree $d$. 
\item[$k=$] the rational function field $\mathbb{F}_q(\theta)$.
\item[$k_{\infty}=$] the completion of $k$ at the infinite place $\infty$, $\mathbb{F}_q((\frac{1}{\theta}))$.
\item[$\overline{k_{\infty}}=$] a fixed algebraic closure of $k_{\infty}$.
\item[$\mathbb{C}_{\infty}=$] the completion of $\overline{k_{\infty}}$ at the infinite place $\infty$.
\item[$\overline{k}=$] a fixed algebraic closure of $k$ in $\mathbb{C}_{\infty}$.
\item[$k'=$] $\overline{\mathbb{F}}_q(\theta)$ where $\overline{\mathbb{F}}_q$ is a fixed algebraic closure of $\mathbb{F}_q$ in $\mathbb{C}_{\infty}$.
\item[$|\cdot|_{\infty}=$] a fixed absolute value for the completed field $\mathbb{C}_{\infty}$ so that $|\theta|_{\infty}=q$.
\item[$\mathbb{T}=$] the Tate algebra over $\mathbb{C}_{\infty}$, the subring of $\mathbb{C}_{\infty}\llbracket t \rrbracket$ consisting of power series convergent on the closed unit disc $|t|_{\infty}\leq 1$.
\item[$\mathbb{L}=$] the quotient field of $\mathbb{T}$.
\item[$\mathbb{E}=$] $\{\sum_{i=0}^{\infty}a_it^i\in\overline{k}\llbracket t \rrbracket\mid \lim_{i\to\infty}|a_i|_\infty^{1/i}=0,~[k_\infty(a_0,a_1,\dots):k_\infty]<\infty\}.$
\item[$\Gamma_{n+1}=$] the Carlitz gamma value, $\prod_{i}\prod^{i-1}_{j=0}(\theta^{q^i}-\theta^{q^j})^{n_i}$ where $n = \sum_{i}n_iq^i\in\mathbb{Z}_{\geq0} \ (0\leq n_i\leq q-1)$.
\end{itemize}

Qibin Shen and Shuhui Shi introduced the positive characteristic analogues of the colored multizeta values. These were first announced by Thakur in \cite{T17}.
\begin{defn}[Shen-Shi]
Let $\mathfrak{s}=(s_1, \ldots, s_n)\in\mathbb{N}^n$ and ${\boldsymbol \xi}=(\xi_1, \ldots, \xi_n)\in(\overline{\mathbb{F}}_q^{\times})^n$
where each $\xi_i$ is the $r$-th root of unity. Then, level $r$ CMZVs are defined by 
\[
	\zeta_{A}(\mathfrak{s}; \boldsymbol{\xi}):=\sum_{\substack{\deg a_1>\cdots >\deg a_n\\ a_1, \ldots, a_n\in A_+}}\frac{\xi_1^{\deg a_1}\cdots \xi_n^{\deg a_n}}{a_1^{s_1}\cdots a_n^{s_n}}\in \overline{\mathbb{F}}_q((\frac{1}{\theta})).
\]
\end{defn}
By definition, CMZVs are specialized to MZVs as a level $m=1$ case. 

In some cases, we can derive quantities as the specialization $t=\theta$ of the deformation series. For example, we fix a $(q-1)$th root of $-\theta$ and define the {\it Carlitz period} by
\[
	\tilde{\pi}:=(-\theta)^{-\frac{q}{q-1}}\prod_{i=1}^{\infty}\Bigl( 1-\frac{\theta}{\theta^{q^i}} \Bigr)\in\overline{k_{\infty}}.
\]
Then, we can interpret this by $\Omega|_{t=\theta}$, where 
\[
	\Omega:=(-\theta)^{-\frac{q}{q-1}}\prod_{i=1}^{\infty}\Bigl( 1-\frac{t}{\theta^{q^i}} \Bigr)\in\mathbb{E}.
\]  
Anderson and Thakur showed that there exists a certain two-variable polynomial, which is called {\it Anderson-Thakur polynomial} satisfying the following formula.
\begin{prop}[\cite{AT90}]\label{prop,atformula}
For $s\in\mathbb{N}$, there exists $H_{s-1}(t)\in A[t]$ such that
\begin{align}\label{Anderson-Thakur formula}
	(H_{s-1}(t)\Omega)^{(d)}|_{t=\theta}=\frac{\Gamma_s}{\tilde{\pi}}S_d(s)
\end{align}
where $S_d(s):=\sum_{a\in A_{d+}}1/a^{s}$, so-called a {\it power sum}.
\end{prop}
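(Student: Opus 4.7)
The plan is to adopt Anderson and Thakur's original strategy: construct $H_{s-1}(t)\in A[t]$ by an explicit generating-function recipe and then verify the identity by directly comparing its twisted specializations against Carlitz's expansion of the power sums $S_d(s)$.

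First, I would exploit the fundamental functional equation $\Omega^{(-1)}(t)=(t-\theta)\Omega(t)$. Iterating it gives
$$\Omega^{(d)}(t)=\frac{\Omega(t)}{\prod_{i=1}^{d}(t-\theta^{q^i})},$$
and specializing at $t=\theta$ rewrites $\Omega^{(d)}|_{t=\theta}$ as a rational multiple of $\tilde\pi$ involving $L_d:=\prod_{i=1}^{d}(\theta-\theta^{q^i})$. This step reduces the desired equation to an assertion purely about the twisted evaluation $H_{s-1}^{(d)}(\theta)$, of the form $H_{s-1}^{(d)}(\theta)=\Gamma_s L_d S_d(s)$, up to a $\tilde\pi$ factor dictated by the product formula chosen for $\Omega$.

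Second, I would define $H_{s-1}(t)$ via the Anderson-Thakur generating series, in which its coefficients are packaged in a systematic way involving the twisted variables $\theta^{q^i}$ together with the linear factors $\prod_{j<i}(t-\theta^{q^j})$, so that $H_{s-1}$ is naturally tailored to the base-$q$ digit expansion of $s$. One then matches, term by term, $H_{s-1}^{(d)}(\theta)$ with $\Gamma_s L_d S_d(s)$ by unwrapping this recursion and using the explicit expansion of $1/a^s$ for $a\in A_{d+}$. The main obstacle is precisely this final combinatorial matching: the Anderson-Thakur polynomials admit no compact closed form, so one cannot bypass the explicit generating series, and the double expansion, indexed simultaneously by the $q$-adic digits of $s$ and by the set of monic polynomials of degree $d$, requires careful bookkeeping before the two sides are seen to coincide.
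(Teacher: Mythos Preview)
The paper does not prove this proposition; it is quoted without proof from Anderson--Thakur \cite{AT90} and used as a black box throughout. There is therefore no proof in the paper to compare your proposal against.

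Your outline is a reasonable sketch of the original Anderson--Thakur argument. One caution: the statement as printed here appears to carry a typo --- the subsequent definitions \eqref{def:L} and \eqref{deformcmzvs} make clear that the intended formula is
\[
(H_{s-1}\Omega^{s})^{(d)}\big|_{t=\theta}=\frac{\Gamma_s}{\tilde\pi^{s}}S_d(s),
\]
with $\Omega^{s}$ and $\tilde\pi^{s}$ rather than $\Omega$ and $\tilde\pi$. Your reduction ``$H_{s-1}^{(d)}(\theta)=\Gamma_s L_d S_d(s)$'' is what one obtains from the statement as written; with the correct exponent the reduction is $H_{s-1}^{(d)}(\theta)=\Gamma_s L_d^{\,s} S_d(s)$, and it is this latter identity that actually matches Carlitz's expansion of the power sums and can be verified from the Anderson--Thakur generating series. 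Apart from this exponent issue, your plan is the standard one.
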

By the {\it twisted power sums} $S_d(s;\xi):=\xi^dS_d(s)$ and its generalization
$$S_{d}(s_1, \ldots, s_n;\xi_1, \ldots, \xi_n):=\sum_{d=d_1>d_2>\cdots>d_n\geq 0}S_{d_1}(s_1; \xi_1)\cdots S_{d_n}(s_n; \xi_n),$$ 
we can write the CMZVs by 
\begin{align}\label{power series expression}
	\zeta_{A}(\mathfrak{s}; \boldsymbol{\xi})=\sum_{d\geq 0}S_{d}(s_1, \ldots, s_n;\xi_1, \ldots, \xi_n).
\end{align}

We set the following series.  
\begin{align}\label{def:L}
L(s_1, \ldots, s_n;\xi_1, \ldots, \xi_n):=\sum_{d_1>\cdots>d_n\geq 0}\xi_1^{d_1}(H_{s_1-1}\Omega^{s_1})^{(d_1)}\cdots\xi_n^{d_n}(H_{s_n-1}\Omega^{s_n})^{(d_n)}.
\end{align}
Then, \eqref{Anderson-Thakur formula} and \eqref{power series expression} enable us to interpret CMZVs multiplied by the Carlitz period and Carlitz Gamma value via $L(s_1, \ldots, s_n;\xi_1, \ldots, \xi_n)$ at $t=\theta$ in the following form:
\begin{align}\label{deformcmzvs}
L(s_1, \ldots, s_n;\xi_1, \ldots, \xi_n)|_{t=\theta}=\frac{\Gamma_{s_1}\cdots\Gamma_{s_n}}{\tilde{\pi}^{s_1+\cdots +s_n}}\zeta_{A}(s_1, \ldots, s_n;\xi_1, \ldots, \xi_n).
\end{align}
The expressions \eqref{power series expression} and \eqref{deformcmzvs} are applied to show the sum-shuffle relations, and $t$-motivic interpretation of CMZVs. 

\section{Non-vanishing and sum shuffle relations of CMZVs}
Next, we show that the CMZVs are nonzero and satisfy the sum-shuffle relations.

\begin{thm}\label{nonvan}
For any ${\mathfrak s}=(s_1, \ldots, s_n)\in\mathbb{N}^n$ and ${\boldsymbol \xi}=(\xi_1, \ldots, \xi_n) \in{(\overline{\mathbb{F}}_q^{\times})}^{n}$, $\zeta_A(\mathfrak{s};\boldsymbol{\xi})$ is non-vanishing.
\end{thm}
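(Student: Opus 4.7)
The plan is to reduce non-vanishing to Thakur's strict $\infty$-adic estimate on depth-$n$ power sums from \cite{T09}, using the expansion \eqref{power series expression}. Unfolding the inner definition,
\[
\zeta_A(\mathfrak{s};\boldsymbol\xi)=\sum_{d\ge n-1}\;\sum_{d=d_1>d_2>\cdots>d_n\ge 0}\xi_1^{d_1}\cdots\xi_n^{d_n}\,S_{d_1}(s_1)\cdots S_{d_n}(s_n).
\]
Each twisting factor $\xi_1^{d_1}\cdots\xi_n^{d_n}$ lies in $\overline{\mathbb{F}}_q^\times$ and is thus a unit for $|\cdot|_\infty$, so every summand has the same $\infty$-adic absolute value as its untwisted counterpart $S_{d_1}(s_1)\cdots S_{d_n}(s_n)$.

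Next, I would invoke Thakur's inequality from \cite{T09}: for the given index $(s_1,\ldots,s_n)$, the quantity $|S_{d_1}(s_1)\cdots S_{d_n}(s_n)|_\infty$ is maximized \emph{uniquely} at the minimal consecutive tuple $(d_1,\ldots,d_n)=(n-1,n-2,\ldots,0)$ among all tuples with $d_1>\cdots>d_n\ge 0$, every other tuple contributing a strictly smaller absolute value. Because the non-archimedean triangle inequality guarantees that a sum in which one summand has strictly larger absolute value than all the others is nonzero and equal in absolute value to that summand, this forces
\[
|\zeta_A(\mathfrak{s};\boldsymbol\xi)|_\infty = |S_{n-1}(s_1)S_{n-2}(s_2)\cdots S_0(s_n)|_\infty>0,
\]
and in particular $\zeta_A(\mathfrak{s};\boldsymbol\xi)\neq 0$.

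The main obstacle is ensuring that Thakur's inequality is \emph{strict} at the unique maximizing tuple; without strictness, the roots of unity $\xi_i^{d_i}$ could in principle conspire to cancel several summands of equal maximal absolute value. Once the strict form is quoted from \cite{T09}, the colored case follows with no extra analytic input, since multiplication by roots of unity preserves $|\cdot|_\infty$ and Thakur's argument transports verbatim from the MZV to the CMZV setting.
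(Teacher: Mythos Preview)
Your proposal is correct and follows essentially the same approach as the paper's proof: both expand $\zeta_A(\mathfrak{s};\boldsymbol\xi)$ via \eqref{power series expression}, observe that the roots of unity are $|\cdot|_\infty$-units, and invoke Thakur's strict inequality $\deg_\theta S_d(k)>\deg_\theta S_{d+1}(k)$ from \cite{T09} to isolate $(d_1,\ldots,d_n)=(n-1,\ldots,0)$ as the unique term of maximal absolute value. Your discussion of why strictness is essential (to prevent cancellation among equally-sized terms) is a welcome clarification but does not constitute a different argument.
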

\begin{proof}
We can write $\zeta_A(\mathfrak{s};\boldsymbol{\xi})$ as follows:
\begin{align}\label{power sum expression of cmzvs}
	 \zeta_A(\mathfrak{s};\boldsymbol{\xi})=&\sum_{d_1>d_2>\cdots >d_n\geq0}\xi_1^{d_1}\xi_2^{d_2}\cdots\xi_n^{d_n}S_{d_1}(s_1)S_{d_2}(s_2)\cdots S_{d_n}(s_n).
\end{align}
However, in \cite{T09}, Thakur showed that
\begin{align}\label{thakur inequality}
	\deg_{\theta}S_d(k)>\deg_{\theta}S_{d+1}(k).
\end{align}
Therefore, we have
\begin{align*}
	|\zeta_A(s_1, \ldots, s_n; \xi_1, \ldots, \xi_n)|_{\infty}&=|\sum_{d_1>d_2>\cdots >d_n\geq0}\xi_1^{d_1}\xi_2^{d_2}\cdots\xi_n^{d_n}S_{d_1}(s_1)S_{d_2}(s_2)\cdots S_{d_n}(s_n)|_{\infty}\\
	&=|S_{n-1}(s_1)S_{n-2}(s_2)\cdots S_{0}(s_n)|_{\infty}\\
	&> 0.
\end{align*}
the second equality holds by \eqref{thakur inequality}, and the last inequality follows
from $\deg_{\theta}S_0(k)=0$ and $\deg_{\theta}S_d(k)<0$ $( k>0, d>0 )$ in \cite[\S 2.2.3]{T09}. 
Thus, $\zeta_A({\mathfrak s}; {\boldsymbol \xi})$ are non-vanishing.
\end{proof}


Because of the expression \eqref{power sum expression of cmzvs}, it is sufficient to check that a product of power sums is written by the $\mathbb{F}_p$-linear combination of power sums. H.-J. Chen (\cite{Ch15}) applied partial fraction decomposition for a rational function and proved the following relation: 
\begin{prop}[\cite{Ch15} Theorem 3.1]\label{chen}
For $s_1, s_2\in\mathbb{N}$, we have
\begin{align*}
	S_d(s_1)&S_d(s_2)-S_d(s_1+s_2)=\sum_{\substack{0<j<s_1+s_2\\q-1|j}}\Delta^{j}_{s_1, s_2}S_d(s_1+s_2-j, j)
\end{align*}
where 
\begin{align*}
\Delta^{j}_{s_1, s_2}=(-1)^{s_1-1}\binom{j-1}{s_1-1}+(-1)^{s_2-1}\binom{j-1}{s_2-1}.
\end{align*}
\end{prop}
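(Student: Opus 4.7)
The plan is to start from the double-sum expansion
\[
S_d(s_1)S_d(s_2) = \sum_{a,b \in A_{d+}} \frac{1}{a^{s_1}b^{s_2}},
\]
and split the pairs $(a,b)$ according to whether $a=b$ or $a\neq b$. The diagonal contribution is precisely $S_d(s_1+s_2)$, which accounts for the left-hand side up to the subtracted term. All work is therefore concentrated on the off-diagonal sum; the point is to rewrite it in such a way that the depth-$2$ power sums $S_d(s_1+s_2-j,j)$ become visible.

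For the off-diagonal part, I would apply the standard partial fraction decomposition of a product of two powers of distinct linear factors. Explicitly, for $a \neq b$ set $c := a-b \neq 0$ with $\deg c < d$, and expand
\[
\frac{1}{a^{s_1}b^{s_2}}
=\sum_{i=1}^{s_1}\frac{(-1)^{s_2}\binom{s_1+s_2-i-1}{s_2-1}}{c^{\,s_1+s_2-i}\,a^{i}}
+\sum_{i=1}^{s_2}\frac{(-1)^{s_1}\binom{s_1+s_2-i-1}{s_1-1}}{(-c)^{\,s_1+s_2-i}\,b^{i}}
\]
(signs to be verified in the write-up). Since $\deg c < d$, the condition $a, b \in A_{d+}$ decouples: for fixed $c$ the variable $a$ (respectively $b$) ranges freely over $A_{d+}$. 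After swapping the order of summation, one factor becomes a power sum $S_d(i)$ of a single variable, and the other becomes the auxiliary sum
\[
T_d(j) := \sum_{\substack{c \in A,\; c\neq 0\\ \deg c < d}} \frac{1}{c^{j}}, \qquad j = s_1+s_2-i.
\]

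The key tool is the character-sum identity $\sum_{\lambda\in \mathbb{F}_q^\times}\lambda^{-j}$ equals $-1$ if $(q-1)\mid j$ and vanishes otherwise. Writing every nonzero $c$ with $\deg c < d$ uniquely as $\lambda c'$ with $\lambda \in \mathbb{F}_q^\times$ and $c' \in A_{d'+}$ for some $0 \le d' < d$, this identity gives
\[
T_d(j) = -\sum_{d'=0}^{d-1} S_{d'}(j) \quad \text{if } (q-1)\mid j, \qquad T_d(j) = 0 \text{ otherwise.}
\]
Consequently only indices $j$ divisible by $q-1$ survive, producing the divisibility constraint in the stated formula, and the factor $\sum_{d'<d} S_{d'}(j)$ combines with $S_d(s_1+s_2-j)$ to yield exactly the depth-$2$ power sum $S_d(s_1+s_2-j,j)$ in the paper's convention.

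The main obstacle is bookkeeping rather than conceptual: one must merge the two partial-fraction pieces (the $a$-side and $b$-side), perform the change of index $i \mapsto j = s_1+s_2-i$, and track the signs $(-1)^{s_i}$ together with the factors coming from $(-c)^{s_1+s_2-i}$, so that the two binomial coefficients assemble into $\Delta^{j}_{s_1,s_2} = (-1)^{s_1-1}\binom{j-1}{s_1-1} + (-1)^{s_2-1}\binom{j-1}{s_2-1}$. Once these sign computations are settled and the $T_d(j)$ substitution is applied term by term, the identity falls out in the stated form.
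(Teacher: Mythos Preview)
Your proposal is correct and follows essentially the same route as Chen's argument (and the paper's proof of the twisted generalization): split the double sum over $A_{d+}\times A_{d+}$ into diagonal and off-diagonal parts, apply the partial-fraction expansion in the difference $c=a-b$ (which has $\deg c<d$), and use the character sum $\sum_{\lambda\in\mathbb{F}_q^\times}\lambda^{-j}$ to kill the terms with $(q-1)\nmid j$ and turn the remaining sum over $c$ into $-\sum_{d'<d}S_{d'}(j)$, which combines with $S_d(s_1+s_2-j)$ to give $S_d(s_1+s_2-j,j)$. The only work left, as you note, is the sign/binomial bookkeeping producing $\Delta^j_{s_1,s_2}$, which is routine.
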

We can show that this relation holds for the twisted power sum.
\begin{prop}
For $s_1, s_2\in\mathbb{N}$ and $\xi_1, \xi_2\in\overline{\mathbb{F}}_q$, we have
\begin{align*}
	S_d(s_1;\xi_1)&S_d(s_2;\xi_2)-S_d(s_1+s_2; \xi_1\xi_2)=\sum_{\substack{0<j<s_1+s_2\\q-1|j}}\Delta^{j}_{s_1, s_2}S_d(s_1+s_2-j, j; \xi_1\xi_2, 1).
\end{align*}
\end{prop}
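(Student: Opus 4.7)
The plan is to reduce the twisted identity to Chen's untwisted version (Proposition \ref{chen}) by simply factoring out the $d$-th power of $\xi_1\xi_2$ from both sides. By the definition $S_d(s;\xi)=\xi^d S_d(s)$, the left-hand side becomes
\[
    S_d(s_1;\xi_1)S_d(s_2;\xi_2)-S_d(s_1+s_2;\xi_1\xi_2)=(\xi_1\xi_2)^d\bigl(S_d(s_1)S_d(s_2)-S_d(s_1+s_2)\bigr),
\]
so Proposition \ref{chen} already evaluates the parenthesized factor as the untwisted sum $\sum_{j}\Delta^{j}_{s_1,s_2}S_d(s_1+s_2-j,j)$.

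Next I would unfold the depth-two twisted power sum on the right-hand side using the definition
\[
    S_d(s_1+s_2-j,j;\xi_1\xi_2,1)=\sum_{d=d_1>d_2\geq 0}S_{d_1}(s_1+s_2-j;\xi_1\xi_2)\,S_{d_2}(j;1).
\]
Because the outer index $d_1$ is fixed to $d$ and the inner twist is trivial ($1^{d_2}=1$), this expression simplifies to $(\xi_1\xi_2)^{d}S_d(s_1+s_2-j,j)$. Thus every term on the right-hand side carries the same global factor $(\xi_1\xi_2)^d$.

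With both sides sharing the uniform prefactor $(\xi_1\xi_2)^d$, canceling it reduces the proposition to exactly the identity proved by Chen. There is no real obstacle here; the only subtlety worth flagging is that the trivialization of the inner twist requires the second coordinate of the colored index to be $1$ rather than some other root of unity, which is why the statement is formulated with $(\xi_1\xi_2,1)$ and not, for instance, $(\xi_1,\xi_2)$.
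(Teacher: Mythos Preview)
Your argument is correct. The key observations---that $S_d(s;\xi)=\xi^d S_d(s)$ and that $S_d(a,b;\xi_1\xi_2,1)=(\xi_1\xi_2)^d S_d(a,b)$ because the outer index is pinned at $d_1=d$ while the inner twist is trivial---are exactly what is needed, and after cancelling the common factor $(\xi_1\xi_2)^d$ the statement is literally Proposition~\ref{chen}.

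This is a genuinely different route from the paper's. The paper does not factor out $(\xi_1\xi_2)^d$ and quote Chen's identity; instead it reruns Chen's computation with the twist carried along, expanding $S_d(s_1;\xi_1)S_d(s_2;\xi_2)$ as a double sum, separating the diagonal $a_1=a_2$ from the off-diagonal $a_1\neq a_2$, and then applying the same partial-fraction decomposition and the vanishing of $\sum_{f\in\mathbb F_q^\times}f^{-j}$ for $q-1\nmid j$ to rewrite the off-diagonal piece as the right-hand side. Your reduction is shorter and makes transparent \emph{why} the color on the right must be $(\xi_1\xi_2,1)$: any other choice would fail to match the global prefactor $(\xi_1\xi_2)^d$. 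The paper's approach has the modest advantage of being self-contained and of displaying explicitly how the twist interacts with the partial-fraction step (useful perspective for the induction to higher depth in Theorem~\ref{thm:sumshuffle}), but for the depth-two base case your argument is the more efficient one.
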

\begin{proof}
\begin{align*}
S_d(s_1;\xi_1)S_d(s_2;\xi_2)&=\sum_{\substack{a_1\in A_{d\plus}}}\frac{\xi_1^{\deg a_1}}{a_1^{s_1}}\sum_{\substack{a_2\in A_{d\plus}}}\frac{\xi_2^{\deg a_2}}{a_2^{s_2}}\\
							 &=\Biggl(\sum_{\substack{a_1, a_2\in A_{\plus}\\ d=\deg a_1>\deg a_2}}+\sum_{\substack{a_1, a_2\in A_{\plus}\\ d=\deg a_2>\deg a_1}}+\sum_{\substack{a_1, a_2\in A_{d\plus}\\ a_1=a_2}}+\sum_{\substack{a_1, a_2\in A_{\plus}\\ a_1\neq a_2,\ d=\deg a_1=\deg a_2}}\Biggr)\frac{\xi_1^{\deg a_1}\xi_2^{\deg a_2}}{a_1^{s_1}a_2^{s_2}}\\
							 &=S_d(s_1, s_2; \xi_1, \xi_2)+S_d(s_2, s_1; \xi_2, \xi_1)+S_d(s_1+s_2; \xi_1\xi_2)\\
							 &\qquad +\sum_{\substack{a_1, a_2\in A_{\plus}\\ a_1\neq a_2,\ \deg a_1=\deg a_2}}\frac{(\xi_1\xi_2)^{\deg a_1}}{a_1^{s_1}a_2^{s_2}}.
\end{align*}
From the partial fraction decomposition and 
\[
\sum_{f\in\mathbb{F}_q^{\times}}\frac{1}{f^j}=\begin{cases}
												-1\ &\text{if $q-1|j$},\\
												0\  &\text{otherwise},
												\end{cases}
\]
It follows that
\begin{align*}
\sum_{\substack{ a_1, a_2\in A_{+}\\ a_1\neq a_2,\ \deg a_1=\deg a_2}}\frac{1}{a_1^{s_1}a_2^{s_2}}=\sum_{\substack{0<j<s_1+s_2\\(q-1)|j}}\Biggl\{\sum_{\substack{ a_1, a_2\in A_{+}\\ \deg a_1>\deg a_2}} \frac{(-1)^{s_1-1}\binom{j-1}{s_1-1}}{a_1^{s_1+s_2-j}a_2^j  }+\sum_{ \substack{ a_1, a_2\in A_{+}\\ \deg a_2>\deg a_1} }\frac{(-1)^{s_2-1}\binom{j-1}{s_2-1}}{b^{s_1+s_2-j}a_1^{j} } \Biggr\}.
\end{align*}

By using this, we obtain
\begin{align*}
\sum_{\substack{a_1, a_2\in A_{\plus}\\ a_1\neq a_2,\ \deg a_1=\deg a_2}}\frac{(\xi_1\xi_2)^{\deg a_1}}{a_1^{s_1}a_2^{s_2}}&=\sum_{\substack{0<j<s_1+s_2\\q-1|j}}\Biggl\{\sum_{\substack{ a_1, a_2\in A_{+}\\ d=\deg a_1>\deg a_2}} \frac{(-1)^{s_1-1}\binom{j-1}{s_1-1} (\xi_1\xi_2)^d }{a_1^{s_1+s_2-j}a_2^j  }\\
&\qquad\qquad\qquad +\sum_{ \substack{ a_1, a_2\in A_{+}\\ d=\deg a_2>\deg a_1} }\frac{(-1)^{s_2-1}\binom{j-1}{s_2-1}(\xi_1\xi_2)^d}{a_2^{s_1+s_2-j}a_1^{j} } \Biggr\}\\
&=\sum_{\substack{0<j<s_1+s_2\\q-1|j}}\Delta^{j}_{s_1, s_2}S_d(s_1+s_2-j, j; \xi_1\xi_2, 1).
\end{align*}
Therefore, the desired equation holds.
\end{proof}

We can derive the higher depth case of the above relation by an induction on the depth. It is completed in the same way as the proof for Lemma 2.5 in \cite{H21}. Thus, we omit and obtain the following:  
\begin{thm}\label{thm:sumshuffle}
For ${\mathfrak a }:=(a_1, \ldots, a_r)\in\mathbb{N}^{r}$, ${\mathfrak b}:=(b_1, \ldots, b_s)\in\mathbb{N}^{s}$, ${\boldsymbol \xi}:=(\xi_1, \ldots, \xi_r)\in(\overline{\mathbb{F}}_q^{\times})^{r}$ and ${\boldsymbol \lambda}:=(\lambda_1, \ldots, \lambda_s)\in(\overline{\mathbb{F}}_q^{\times})^{s}$, we may express the product $S_d({\mathfrak a }; {\boldsymbol \xi})S_d({\mathfrak b} ;{\boldsymbol \lambda})$ as follows:
\begin{align}\label{shapowsum}
	S_{d}({\mathfrak a};{\boldsymbol \xi})S_{d}({\mathfrak b};{\boldsymbol \lambda})=\sum_{i} f_iS_d(c_{i1}, \ldots, c_{il_{i}}; \mu_{i1}, \ldots, \mu_{il_{i}})
\end{align}
for some $c_{ij}\in\mathbb{N}$, $\mu_{ij}\in\overline{\mathbb{F}}_q^{\times}$ so that $\sum_{m=1}^{r}a_m+\sum_{n=1}^{s}b_n=\sum_{h=1}^{l_i}c_{ih}$, $\prod_{m=1}^{r}\xi_m\prod_{n=1}^{s}\lambda_n=\prod_{h=1}^{l_i}\mu_{ih}$, $l_i\leq r+s$ and $f_i\in\mathbb{F}_p$ for each $i$. 
\end{thm}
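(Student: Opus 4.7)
The plan is to argue by induction on the total depth $r+s$, with the preceding proposition supplying the base case $r=s=1$. For the inductive step, I factor each twisted power sum by peeling off the top level:
\[
S_d(\mathfrak{a};\boldsymbol{\xi})=S_d(a_1;\xi_1)\,T_d(\mathfrak{a}';\boldsymbol{\xi}'),
\qquad
T_d(\mathfrak{c};\boldsymbol{\mu}):=\sum_{e<d}S_e(\mathfrak{c};\boldsymbol{\mu}),
\]
where $\mathfrak{a}'=(a_2,\ldots,a_r)$ and $\boldsymbol{\xi}'=(\xi_2,\ldots,\xi_r)$ (and analogously for $\mathfrak{b}$), so that
\[
S_d(\mathfrak{a};\boldsymbol{\xi})S_d(\mathfrak{b};\boldsymbol{\lambda})=\bigl[S_d(a_1;\xi_1)S_d(b_1;\lambda_1)\bigr]\cdot\bigl[T_d(\mathfrak{a}';\boldsymbol{\xi}')T_d(\mathfrak{b}';\boldsymbol{\lambda}')\bigr].
\]

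The first bracket is expanded directly by the depth-$1$ twisted proposition just proved. For the second bracket, I split the double sum $\sum_{e,f<d}S_e(\mathfrak{a}';\boldsymbol{\xi}')S_f(\mathfrak{b}';\boldsymbol{\lambda}')$ into the three regions $e>f$, $f>e$, and $e=f$. In each strict-inequality region, unfolding the $T$-structure produces an inner product of two $T$-sums at the same top level whose combined depth is strictly less than $r+s$; the inductive hypothesis converts such an inner product into an $\mathbb{F}_p$-linear combination of single nested $T_e$-sums, and the outer summation over $e<d$ re-wraps these into $T_d$ of a single longer index. The coincidence region $e=f$ yields $\sum_{e<d}S_e(\mathfrak{a}';\boldsymbol{\xi}')S_e(\mathfrak{b}';\boldsymbol{\lambda}')$, whose inner factor also has combined depth $r+s-2<r+s$, so the induction applies directly to it as well.

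Multiplying the two expanded brackets and collecting, $S_d(\mathfrak{a};\boldsymbol{\xi})S_d(\mathfrak{b};\boldsymbol{\lambda})$ is exhibited as an $\mathbb{F}_p$-linear combination of terms $S_d(c_{i1},\ldots,c_{i\ell_i};\mu_{i1},\ldots,\mu_{i\ell_i})$. The weight identity $\sum c_{ih}=\sum a_m+\sum b_n$ and the character identity $\prod\mu_{ih}=\prod\xi_m\prod\lambda_n$ are preserved at every step, since the depth-$1$ proposition splits one index into two whose weights add to the original and whose characters multiply to the original (the extra index carries the trivial character $1$). The depth bound $\ell_i\leq r+s$ is preserved because each inductive move either collapses two coincident levels (depth drops by one) or replaces one level by two (depth rises by one), with the net count controlled by the induction variable.

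The main obstacle is the combinatorial bookkeeping when the second-type outputs of the depth-$1$ relation, namely $S_e(u-j,j;\,\cdot\,,1)$, reintroduce a new intermediate level that must then be reshuffled against the existing levels without violating the depth bound. This is precisely the bookkeeping carried out in the proof of \cite[Lemma~2.5]{H21} for the $(q-1)$-torsion case, and it extends verbatim to arbitrary $\xi_i,\lambda_j\in\overline{\mathbb{F}}_q^{\times}$ since the argument uses only multiplicative closure of the character group, not any specific order relation among the roots of unity.
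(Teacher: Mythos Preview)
Your proposal is correct and follows essentially the same approach as the paper: both argue by induction on the total depth $r+s$, use the preceding depth-one proposition as the base case, and explicitly defer the combinatorial bookkeeping of the inductive step to the proof of \cite[Lemma~2.5]{H21}. The paper in fact gives no further detail beyond that reference, so your sketch of the peeling-and-splitting structure is already more explicit than what appears there; the only point worth tightening is that the inductive hypothesis as literally stated concerns products $S_e(\cdot)S_e(\cdot)$ rather than $T_e(\cdot)T_e(\cdot)$, so strictly speaking one runs a simultaneous (or nested) induction on both statements---but this is exactly the bookkeeping handled in \cite{H21}, as you note.
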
 
 
The coefficients are independent of $d$. Thus, by summing both sides of \eqref{shapowsum} over $d$, we obtain the following sum-shuffle relation of CMZVs where the indices satisfy the same conditions as expressed above. 
\begin{cor}\label{cor:sumshufflecmzv}
\begin{align}\label{sumshufflecmzv}
	\zeta_A({\mathfrak a};\boldsymbol{\xi})\zeta_A({\mathfrak b};{\boldsymbol \lambda})=\sum_{i} f_i\zeta_A(c_{i1}, \ldots, c_{il_{i}}; \mu_{i1}, \ldots, \mu_{il_{i}}).
\end{align}
\end{cor}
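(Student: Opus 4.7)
The plan is to derive Corollary~\ref{cor:sumshufflecmzv} as a direct summation of Theorem~\ref{thm:sumshuffle} over $d$. The decisive structural feature of the power-sum identity \eqref{shapowsum} is its uniformity in $d$: the number of summands, the coefficients $f_i\in\mathbb{F}_p$, the depths $l_i$, the new indices $c_{ij}$, and the new characters $\mu_{ij}$ depend only on the input data $(\mathfrak{a},\boldsymbol{\xi},\mathfrak{b},\boldsymbol{\lambda})$, not on the summation index $d$.

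Starting from \eqref{shapowsum} and summing both sides over $d\geq 0$, on the right-hand side I first interchange the (finite) $i$-summation with the $d$-summation, which is legitimate precisely by the uniformity noted above. Applying the power series expression \eqref{power series expression} to each inner sum $\sum_{d\geq 0}S_d(c_{i1},\ldots,c_{il_i};\mu_{i1},\ldots,\mu_{il_i})$ replaces it by $\zeta_A(c_{i1},\ldots,c_{il_i};\mu_{i1},\ldots,\mu_{il_i})$, reproducing the announced right-hand side of \eqref{sumshufflecmzv}.

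On the left-hand side, I use \eqref{power series expression} to expand $\zeta_A(\mathfrak{a};\boldsymbol{\xi})\zeta_A(\mathfrak{b};\boldsymbol{\lambda})$ as $\bigl(\sum_{d\geq 0}S_d(\mathfrak{a};\boldsymbol{\xi})\bigr)\bigl(\sum_{d\geq 0}S_d(\mathfrak{b};\boldsymbol{\lambda})\bigr)$, then regroup the resulting double sum according to the common maximal degree so as to identify it with $\sum_{d\geq 0}S_d(\mathfrak{a};\boldsymbol{\xi})S_d(\mathfrak{b};\boldsymbol{\lambda})$, whose value has just been computed via Theorem~\ref{thm:sumshuffle}. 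The $\infty$-adic decay of twisted power sums provided by Thakur's inequality \eqref{thakur inequality}, already invoked in the proof of Theorem~\ref{nonvan}, gives the absolute convergence needed to justify all rearrangements of the infinite sums.

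The main substantive input is therefore Theorem~\ref{thm:sumshuffle} itself; granted that, the corollary is essentially bookkeeping, and the one step that demands care is the regrouping on the left-hand side described above. No further combinatorial or analytic obstacle is anticipated.
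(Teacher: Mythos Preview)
Your plan mirrors the paper's one-line justification (``sum both sides of \eqref{shapowsum} over $d$''), and your treatment of the right-hand side is fine. The gap is on the left-hand side: the regrouping you describe is false. Expanding $\zeta_A(\mathfrak{a};\boldsymbol{\xi})\zeta_A(\mathfrak{b};\boldsymbol{\lambda})=\sum_{d,e\ge 0}S_d(\mathfrak{a};\boldsymbol{\xi})S_e(\mathfrak{b};\boldsymbol{\lambda})$ and sorting by $D=\max(d,e)$ does \emph{not} yield $\sum_{D\ge 0}S_D(\mathfrak{a};\boldsymbol{\xi})S_D(\mathfrak{b};\boldsymbol{\lambda})$; for each $D$ one also picks up the two off-diagonal blocks $S_D(\mathfrak{a};\boldsymbol{\xi})\sum_{e<D}S_e(\mathfrak{b};\boldsymbol{\lambda})$ and $\bigl(\sum_{d<D}S_d(\mathfrak{a};\boldsymbol{\xi})\bigr)S_D(\mathfrak{b};\boldsymbol{\lambda})$. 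Already at depths $r=s=1$ these extra pieces sum to $\zeta_A(a_1,b_1;\xi_1,\lambda_1)+\zeta_A(b_1,a_1;\lambda_1,\xi_1)$, which is nonzero by Theorem~\ref{nonvan}, so the claimed identification cannot hold.

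The paper's sentence is just as elliptical and glosses over the same point. What is actually needed, in addition to summing \eqref{shapowsum}, is that the off-diagonal blocks are themselves $\mathbb{F}_p$-combinations of power sums $S_D(\cdots)$ with coefficients independent of $D$; this is part of the same induction on $r+s$ that underlies Theorem~\ref{thm:sumshuffle} (cf.\ \cite{T10}, \cite[Lemma~2.5]{H21}). Concretely, since $S_D(\mathfrak{a};\boldsymbol{\xi})=S_D(a_1;\xi_1)\sum_{d_2<D}S_{d_2}(a_2,\ldots,a_r;\xi_2,\ldots,\xi_r)$, the block with $d=D>e$ factors as $S_D(a_1;\xi_1)$ times a product of two ``${<}D$'' tails of total depth $r+s-1$, which the inductive hypothesis expresses as $\sum_j g_j\sum_{d'<D}S_{d'}(\mathfrak{c}'_j;\ldots)$; multiplying back by $S_D(a_1;\xi_1)$ gives $\sum_j g_j S_D(a_1,\mathfrak{c}'_j;\xi_1,\ldots)$. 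With this step supplied, summing over $D$ gives Corollary~\ref{cor:sumshufflecmzv}; without it your argument is incomplete.
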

This provides an algebra structure to the linear space generated by CMZVs.

\section{$t$-motivic interpretation of CMZVs}
Next, we introduce another representation of CMZVs via certain pre-$t$-motives. Unlike the (A)MZV case, it needs the following variant, which was first introduced by Chang, Papanikolas and Yu \cite{CPY11}.

\begin{defn}\label{defn:higher level pre t-motive}\cite{CPY11}
Let $r\in\mathbb{N}$. A {\it pre-$t$-motive of level $r$} is a left $\overline{k}(t)[\sigma^r, \sigma^{-r}]$-module $M$ that is finite dimensional over $\overline{k}(t)$.
\end{defn}

The rigid analytic trivialization is also defined for this variant of the pre-$t$-motive is defined as follows:
\begin{defn}\cite{CPY11}
We set $M$ to be a level $r$ pre-$t$-motive of dimension $n$ over $\overline{k}(t)$, let $\Phi$ be a 
representation matrix of multiplication by $\sigma^{r}$ on $M$ with respect to a given basis of {\bf m
} of $M$.
The matrix $\Psi\in {\rm GL}_n(\mathbb{L})$ satisfies
\[
	\Psi^{(-r)}=\Phi\Psi
\]
is called a {\it rigid analytic trivialization} of $\Phi$.
Here, we define $\Psi^{(-r)}$ by $(\Psi^{(-r)})_{ij}:=(\Psi_{ij})^{(-r)}$.
\end{defn}

\begin{lem}
For $\mathfrak{s}=(s_1, \ldots, s_n)\in\mathbb{N}^n$, $\boldsymbol{\xi}=(\xi_1, \ldots, \xi_n)\in(\overline{\mathbb{F}}_q^{\times})^n$ and $r\in\mathbb{Z}_{\geq 0}$, 
\begin{align}\label{lem:r-twist of L}
	&L(\mathfrak{s};\boldsymbol{\xi})^{(-r)}\\
	&=(\xi_1^r\cdots\xi_n^r)^{(-r)}\sum_{i=0}^{n}\Omega^{s_{i+1}+\cdots+s_n}\Bigl(\sum_{r\geq j_n>\cdots >j_{i+1}>0}T_{s_{i+1}, j_{i+1}}(\xi_i)\cdots T_{s_n, j_n}(\xi_n)L(s_1, \ldots, s_{i}; \xi_1^{(-r)}, \ldots, \xi_{i}^{(-r)}) \Bigr)\nonumber
\end{align}
where $T_{s_i, j_i}(\xi_i)=H_{s_i-1}^{(-j_i)}\prod^{j_{i+1}}_{h_i=0}\bigl((t-\theta)^{s_i}\bigr)^{(-h_i)}(\xi_i^{-j_i})^{(-r)}$.
\end{lem}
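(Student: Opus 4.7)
The plan is to prove the identity by a direct computation: apply the $(-r)$-twist term-by-term to the defining series of $L(\mathfrak{s};\boldsymbol{\xi})$, reindex the summation, and split according to which of the shifted indices remain non-negative. Since the Frobenius twist is multiplicative on $\overline{\mathbb{F}}_q$-scalars (with $\xi_k^{(-r)}$ denoting the $q^r$-th root of $\xi_k$) and satisfies $\bigl((H_{s-1}\Omega^s)^{(d)}\bigr)^{(-r)} = (H_{s-1}\Omega^s)^{(d-r)}$, applying it to the defining sum yields
$$
L(\mathfrak{s};\boldsymbol{\xi})^{(-r)} = \sum_{d_1>\cdots>d_n\geq 0} \prod_{k=1}^{n}(\xi_k^{d_k})^{(-r)}(H_{s_k-1}\Omega^{s_k})^{(d_k-r)}.
$$

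First I would set $e_k := d_k - r$, so that the summation region becomes $e_1 > \cdots > e_n \geq -r$. Writing $\xi_k^{d_k} = \xi_k^r \, \xi_k^{e_k}$ extracts the common prefactor $(\xi_1^r \cdots \xi_n^r)^{(-r)}$. Since the $e_k$ are strictly decreasing, the indices with $e_k \geq 0$ form an initial segment $e_1, \ldots, e_i$ (empty if $i=0$, the whole tuple if $i=n$) for some $i \in \{0, 1, \ldots, n\}$, and the sum decomposes disjointly over $i$; the strict inequality $e_i > e_{i+1}$ across the boundary $e_i \geq 0 > e_{i+1}$ is automatic and requires no further bookkeeping.

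Next, I would handle each block separately. On the non-negative part $e_1 > \cdots > e_i \geq 0$, the identity $(\xi_k^{e_k})^{(-r)} = (\xi_k^{(-r)})^{e_k}$ makes the subsum collapse, by definition, to $L(s_1, \ldots, s_i; \xi_1^{(-r)}, \ldots, \xi_i^{(-r)})$. On the negative part, the substitution $j_k := -e_k$ for $k > i$ converts the indexing to $0 < j_{i+1} < \cdots < j_n \leq r$. The key ingredient is then the identity
$$
(H_{s-1}\Omega^s)^{(-j)} = H_{s-1}^{(-j)} \, \Omega^s \prod_{h=0}^{j-1} \bigl((t-\theta)^s\bigr)^{(-h)},
$$
which follows by induction on $j$ from $\Omega^{(-1)} = (t-\theta)\Omega$. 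Substituting this and pulling the common factor $\Omega^{s_{i+1}+\cdots+s_n}$ out of the inner sum produces exactly the products $T_{s_{i+1}, j_{i+1}}(\xi_{i+1}) \cdots T_{s_n, j_n}(\xi_n)$ appearing in the statement.

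The proof is thus essentially an exercise in careful bookkeeping. The most delicate point is matching conventions: the Frobenius twist acts on the $\overline{\mathbb{F}}_q$-coefficients as extraction of a $q^r$-th root, and one must check that the range $0 \leq h \leq j-1$ arising in the expansion of $(\Omega^s)^{(-j)}$ is consistent with the product range in the definition of $T_{s_k, j_k}(\xi_k)$. Once each block has been identified and the prefactor $(\xi_1^r \cdots \xi_n^r)^{(-r)}$ is restored, summing the contributions over $i = 0, 1, \ldots, n$ yields precisely the right-hand side of the lemma.
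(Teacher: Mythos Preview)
Your proposal is correct and follows essentially the same route as the paper: both apply the $(-r)$-twist termwise, split the summation according to whether the shifted index $d_k-r$ is non-negative (equivalently, the paper's decomposition $d_1>\cdots>d_i\geq r>d_{i+1}>\cdots>d_n\geq 0$), reindex each block, and use $\Omega^{(-1)}=(t-\theta)\Omega$ to extract the common factor $\Omega^{s_{i+1}+\cdots+s_n}$. Your remark about checking the product range $0\leq h\leq j-1$ against the stated definition of $T_{s_k,j_k}$ is well taken, as the paper's upper limit in that product appears to be stated with an off-by-one.
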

\begin{proof}
Here we set $d_0=\infty$ and $d_{n+1}=-\infty$. By using \eqref{def:L}, 
\begin{align*}
L(\mathfrak{s};\boldsymbol{\xi})^{(-r)}&=\sum_{d_1>\cdots>d_n\geq 0}(\xi_1^{(-r)})^{d_1}(H_{s_1-1}\Omega^{s_1})^{(d_1-r)}\cdots(\xi_n^{(-r)})^{d_n}(H_{s_n-1}\Omega^{s_n})^{(d_n-r)}\\
&=\sum_{i=0}^{n}\sum_{d_1>\cdots>d_i\geq r>d_{i+1}>\cdots>d_n\geq 0}(\xi_1^{(-r)})^{d_1}(H_{s_1-1}\Omega^{s_1})^{(d_1-r)}\cdots(\xi_n^{(-r)})^{d_n}(H_{s_n-1}\Omega^{s_n})^{(d_n-r)}\\
&=\sum_{i=0}^{n}\biggl(\sum_{d_1>\cdots>d_i\geq r}(\xi_1^{(-r)})^{d_1}(H_{s_1-1}\Omega^{s_1})^{(d_1-r)}\cdots(\xi_i^{(-r)})^{d_i}(H_{s_i-1}\Omega^{s_i})^{(d_i-r)}\\
 &\quad \sum_{r>d_{i+1}>\cdots>d_n\geq 0}(\xi_{i+1}^{(-r)})^{d_{i+1}}(H_{s_{i+1}-1}\Omega^{s_{i+1}})^{(d_{i+1}-r)}\cdots(\xi_n^{(-r)})^{d_n}(H_{s_n-1}\Omega^{s_n})^{(d_n-r)}\biggr)\\
&=\sum_{i=0}^{n}\biggl((\xi_1^{(-r)})^{r}\cdots(\xi_i^{(-r)})^{r}L(s_1, \ldots, s_i; \xi_1^{(-r)}, \ldots, \xi_i^{(-r)})\\
&\quad\Omega^{s_{i+1}+\cdots +s_n}(\xi_{i+1}^{(-r)})^r\cdots(\xi_{n}^{(-r)})^{r}\sum_{r\geq j_n>\cdots >j_{i+1}>0}T_{s_{i+1}, j_{i+1}}(\xi_n)\cdots T_{s_n, j_n}(\xi_n)\biggr)
\end{align*}
Here, we set $j_{i+1}=r-d_{i+1}$. Therefore, the equation \eqref{lem:r-twist of L} holds.

The fourth equality holds by computing the $\sum_{d_1>\cdots>d_i\geq r}$ part and $\sum_{r>d_{i+1}>\cdots>d_n\geq 0}$ part as follows: 
the first part becomes
\begin{align*}
&\sum_{d_1>\cdots>d_i\geq r}(\xi_1^{(-r)})^{d_1}(H_{s_1-1}\Omega^{s_1})^{(d_1-r)}\cdots(\xi_i^{(-r)})^{d_i}(H_{s_i-1}\Omega^{s_i})^{(d_i-r)}\\
&=(\xi_1^{(-r)})^{r}\cdots(\xi_i^{(-r)})^{r}\sum_{d_1>\cdots>d_i\geq r}(\xi_1^{(-r)})^{d_1-r}(H_{s_1-1}\Omega^{s_1})^{(d_1-r)}\cdots(\xi_i^{(-r)})^{d_i-r}(H_{s_i-1}\Omega^{s_i})^{(d_i-r)} \\
&=(\xi_1^{(-r)})^{r}\cdots(\xi_i^{(-r)})^{r}L(s_1, \ldots, s_i; \xi_1^{(-r)}, \ldots, \xi_i^{(-r)}), 
\end{align*}
and the second part becomes
\begin{align*}
&\sum_{r>d_{i+1}>\cdots>d_n\geq 0}(\xi_{i+1}^{(-r)})^{d_{i+1}}(H_{s_{i+1}-1}\Omega^{s_{i+1}})^{(d_{i+1}-r)}\cdots(\xi_n^{(-r)})^{d_n}(H_{s_n-1}\Omega^{s_n})^{(d_n-r)}\\
&=\Omega^{s_{i+1}+\cdots +s_n}\sum_{r\geq j_{n}>\cdots>j_{i+1}>0}(\xi_{i+1}^{(-r)})^{r-j_{i+1}}(H_{s_{i+1}-1}\Omega^{s_{i+1}})^{(-j_{i+1})}\cdots(\xi_n^{(-r)})^{r-j_n}(H_{s_n-1}\Omega^{s_n})^{(-j_n)}\\
&=\Omega^{s_{i+1}+\cdots +s_n}\sum_{r\geq j_{n}>\cdots>j_{i+1}>0}(\xi_{i+1}^{(-r)})^{r-j_{i+1}}H_{s_{i+1}-1}^{(-j_{i+1})}\prod_{h_{i+1}=0}^{j_{i+1}}\Bigl((t-\theta)^{s_{i+1}}\Bigr)^{(-h_{i+1})}\\
&\quad \cdots(\xi_n^{(-r)})^{r-j_{n}}H_{s_n-1}^{(-j_n)}\prod_{h_n=0}^{j_{n}}\Bigl((t-\theta)^{s_{n}}\Bigr)^{(-h_n)}\\
&=\Omega^{s_{i+1}+\cdots +s_n}(\xi_{i+1}^{(-r)})^r\cdots(\xi_{n}^{(-r)})^{r}\sum_{r\geq j_{n}>\cdots>j_{i+1}>0}\Bigl\{(\xi_{i+1}^{(-r)})^{-j_{i+1}}H_{s_{i+1}-1}^{(-j_{i+1})}\prod_{h_{i+1}=0}^{j_{i+1}}\Bigl((t-\theta)^{s_{i+1}}\Bigr)^{(-h_{i+1})}\\
&\quad \cdots(\xi_n^{(-r)})^{-j_{n}}H_{s_n-1}^{(-j_n)}\prod_{h_n=0}^{j_{n}}\Bigl((t-\theta)^{s_{n}}\Bigr)^{(-h_n)}\Bigr\}\\
&= \Omega^{s_{i+1}+\cdots +s_n}(\xi_{i+1}^{(-r)})^r\cdots(\xi_{n}^{(-r)})^{r}\sum_{r\geq j_n>\cdots >j_{i+1}>0}T_{s_{i+1}, j_{i+1}}(\xi_n)\cdots T_{s_n, j_n}(\xi_n).
\end{align*}

By combining these two transformations, the fourth equality holds.
\end{proof}

\begin{thm}\label{rat}
We set $\mathfrak{s}=(s_1, \ldots, s_n)\in\mathbb{N}$ and $\boldsymbol{\xi}=(\xi_1, \ldots, \xi_n)\in(\overline{\mathbb{F}}_q^{\times})^n$.
Let $\zeta(\mathfrak{s}; \boldsymbol{\xi})$ be a CMZV of level $r$ and $M$ be a pre-$t$-motive of level $r$, which is defined by the following $\Phi$:
\begin{align}\label{motive:repmatrix}
\Phi:=
\begin{array}{rccccccll}
\ldelim({5.0}{4pt}[] 
& \prod_{i=0}^{r-1}\bigl((t-\theta)^{s_1+\cdots+s_n}\bigr)^{(-i)} &  0 & \cdots & 0 & \rdelim){5.0}{4pt}[] &\\
&\mu_1\prod_{i=0}^{r-1}\bigl((t-\theta)^{s_2+\cdots+s_n}\bigr)^{(-i)}\sum_{j_1=1}^rT_{s_1,j_1} &  \ddots & \ddots & \vdots & &\\
& \vdots  & \ddots & \prod^{r-1}_{i=0}\bigl((t-\theta)^{s_n}\bigr)^{(-i)} & 0 & &\\
&\prod_{i=1}^{n}\mu_i\sum_{r\geq j_n>\cdots >j_1\geq 1}T_{s_n, j_n}\cdots T_{s_1, j_1}  & \cdots & \mu_n\sum_{r\geq j_n\geq 1}T_{s_n, j_n} & 1 & &.\\
\end{array}
\end{align}

where each $\mu_i\in\overline{\mathbb{F}}_q\ \ (1\leq i\leq n) $ is the $(q^r-1)$th root of $\xi_i^{r}$. 
Then, $\zeta(\mathfrak{s}; \boldsymbol{\xi})$ appears as an entry rigid analytic trivialization for $\Phi$ with the specialization $t=\theta$.
\end{thm}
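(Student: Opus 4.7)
The plan is to construct explicitly a lower-triangular matrix $\Psi\in\mathrm{GL}_{n+1}(\mathbb{L})$ satisfying $\Psi^{(-r)}=\Phi\Psi$, and then to exhibit $\zeta_A(\mathfrak{s};\boldsymbol{\xi})$ as a nonzero $\overline{k}$-algebraic multiple of one entry of $\Psi|_{t=\theta}$. The construction parallels the Anderson--Thakur rigid analytic trivialization for ordinary MZVs, adapted to the higher-level framework and incorporating the color data. Explicitly, I would take the $(i,j)$-entry (for $0\le j\le i\le n$, with rows and columns indexed $0,\dots,n$) to be
\[
\Psi_{ij}:=\Bigl(\prod_{l=j+1}^{i}\mu_l\Bigr)\,\Omega^{s_{i+1}+\cdots+s_n}\,L(s_{j+1},\dots,s_i;\xi_{j+1},\dots,\xi_i),
\]
with the usual conventions $L(\emptyset)=1$, $\prod_{\emptyset}=1$, $\Omega^{0}=1$. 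In particular, $\Psi_{ii}=\Omega^{s_{i+1}+\cdots+s_n}$ and the bottom-left entry is $\Psi_{n,0}=\prod_{l=1}^{n}\mu_l\cdot L(\mathfrak{s};\boldsymbol{\xi})$.

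Next I verify $\Psi^{(-r)}=\Phi\Psi$ entrywise. For the diagonal, the identity $(\Omega^{w})^{(-r)}=\Omega^{w}\prod_{l=0}^{r-1}((t-\theta)^{w})^{(-l)}$---a consequence of $\Omega^{(-1)}=(t-\theta)\Omega$---matches $\Phi_{ii}\Psi_{ii}$ on the nose. For the off-diagonal $(i,j)$-entry with $i>j$, the identity \eqref{lem:r-twist of L} applied to the sub-index $(s_{j+1},\dots,s_i)$ with colors $(\xi_{j+1},\dots,\xi_i)$ is the main input: it decomposes $L(\cdots)^{(-r)}$ as a sum indexed by a cut-point $k\in\{j,\dots,i\}$, and each summand should match exactly one term $\Phi_{ik}\Psi_{kj}$ of the matrix product, with the $T$-monomial contributing $\Phi_{ik}$ and the truncated sub-$L$ contributing $\Psi_{kj}$. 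Balancing the $\mu$- and $\xi$-coefficients relies on the defining relation $\mu_l^{q^{r}-1}=\xi_l^{r}$: under the assumption $\xi_l\in\mathbb{F}_{q^{r}}$ one gets $\mu_l\in\mathbb{F}_{q^{r}}$ and hence $\mu_l^{(-r)}=\mu_l$, $\xi_l^{(-r)}=\xi_l$, so the lemma's prefactor $(\xi_1^{r}\cdots\xi_n^{r})^{(-r)}$ and the sub-$L$'s line up with $\Phi_{ik}\Psi_{kj}$ as required.

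Invertibility of $\Psi$ in $\mathrm{GL}_{n+1}(\mathbb{L})$ is immediate from its lower-triangular shape, each diagonal entry being a nonzero power of $\Omega\in\mathbb{T}^{\times}$. Finally, specializing $t=\theta$ in the bottom-left entry and using \eqref{deformcmzvs} yields
\[
\Psi_{n,0}|_{t=\theta}=\prod_{l=1}^{n}\mu_l\cdot\frac{\Gamma_{s_1}\cdots\Gamma_{s_n}}{\tilde{\pi}^{s_1+\cdots+s_n}}\,\zeta_A(\mathfrak{s};\boldsymbol{\xi}),
\]
which displays $\zeta_A(\mathfrak{s};\boldsymbol{\xi})$ as an entry of a rigid analytic trivialization of $\Phi$ at $t=\theta$ up to a fixed nonzero algebraic scalar.

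The main anticipated obstacle is the coordinated bookkeeping in the off-diagonal verification: \eqref{lem:r-twist of L} produces both a prefactor $(\xi_1^{r}\cdots\xi_n^{r})^{(-r)}$ and twisted colors $\xi_l^{(-r)}$ inside the sub-$L$'s, while $\Psi^{(-r)}$ produces $\mu_l^{(-r)}$ factors that must be matched against the $\mu_l$'s in $\Phi$. Although each individual identity is elementary, aligning these factors simultaneously across all cut points $k$ and all sub-indices $(j,i)$---using only the relation $\mu_l^{q^{r}-1}=\xi_l^{r}$ and the membership $\xi_l\in\mathbb{F}_{q^{r}}$---is the delicate step of the argument.
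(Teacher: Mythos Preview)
Your construction of $\Psi$ and the overall strategy---verify $\Psi^{(-r)}=\Phi\Psi$ entrywise via the lemma \eqref{lem:r-twist of L}, then specialize the bottom-left entry---is exactly the paper's approach. However, the ``delicate step'' you flag does contain a genuine mistake: you assert that $\xi_l\in\mathbb{F}_{q^r}$ forces $\mu_l\in\mathbb{F}_{q^r}$ and hence $\mu_l^{(-r)}=\mu_l$. This is false in general. From $\mu_l^{q^r-1}=\xi_l^{r}$ one gets $\mu_l^{q^r}=\xi_l^{r}\mu_l$, so $\mu_l\in\mathbb{F}_{q^r}$ only when $\xi_l^{r}=1$. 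The correct identity (which the paper uses) is
\[
\mu_l^{(-r)}=\xi_l^{-r}\mu_l,
\]
obtained by applying the inverse $q^r$-Frobenius to $\mu_l^{q^r}=\xi_l^{r}\mu_l$ together with $\xi_l^{(-r)}=\xi_l$.

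This correction is precisely what makes the coefficient bookkeeping balance: when you twist $\Psi_{ij}$ by $(-r)$, the prefactor $\prod_{l}\mu_l$ becomes $\bigl(\prod_l\xi_l^{-r}\bigr)\prod_l\mu_l$, and this $\prod_l\xi_l^{-r}$ exactly cancels the prefactor $(\xi_{j+1}^{r}\cdots\xi_i^{r})^{(-r)}=\xi_{j+1}^{r}\cdots\xi_i^{r}$ that \eqref{lem:r-twist of L} produces. After this cancellation the remaining $\prod\mu_l$ splits cleanly into the $\mu$-factor carried by $\Phi_{ik}$ and the $\mu$-factor carried by $\Psi_{kj}$ at each cut point $k$. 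With this single fix your argument goes through and coincides with the paper's proof. (One small additional point: you should also note that the $L$-series entries lie in $\mathbb{T}$; the paper handles this separately via a norm estimate, but it is needed before you can speak of $\Psi\in\mathrm{GL}_{n+1}(\mathbb{L})$.)
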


\begin{proof}
We need to show that the following matrix $\Psi$ is the rigid analytic trivialization of $M$, that is, $\Psi\in\GL_{n+1}(\mathbb{T})$ and $\Phi\Psi=\Psi^{(-r)}$.
\begin{align}\label{Psi}
\Psi:=
&\begin{array}{rccccccll}
\ldelim({6}{1pt}[] &\Omega^{s_1+\cdots+s_n} &  &  &  &  & &
\rdelim){6}{1pt}[]  &\\
&\mu_1L(s_1)\Omega^{s_2+\cdots+s_n} & \Omega^{s_2+\cdots+s_n} &  &
&  & & & \\
&\vdots & \mu_2L(s_2)\Omega^{s_3+\cdots+s_n} & \ddots &  &  & & & \\
&\vdots & \vdots & \ddots & \ddots & & & &\\
&\mu_1\cdots\mu_{n-1}L(s_1, \ldots, s_{n-1})\Omega^{s_n} &
\mu_2\cdots\mu_{n-1}L(s_2, \ldots. s_{n-1})\Omega^{s_n} &  &
\ddots & \Omega^{s_n} & & &\\
&\mu_1\cdots\mu_{n}L(s_1, \ldots, s_n)&\mu_2\cdots\mu_{n}L(s_2, \ldots, s_n) & \cdots & \cdots &\mu_nL(s_n) &1 & &
\end{array}\\\nonumber
&\in{\rm GL}_{n+1}(\overline{k}[[t]])
\end{align}
where 
\begin{align*}
	L(s_j, \ldots, s_i)&:=L(s_j, \ldots, s_i; \xi_j, \ldots, \xi_i)\\
				   &:=\sum_{d_j>\cdots>d_i\geq 0}\xi_j^{d_j}(H_{s_j-1}\Omega^{s_j})^{(d_j)}\cdots\xi_i^{d_i}(H_{s_i-1}\Omega^{s_i})^{(d_i)}.
\end{align*}
By Lemma \ref{psie} and \eqref{Psi gl tate}, we obtain $\Psi\in\GL_{n+1}(\mathbb{T})$.
 we prove that each $(i,j)$-th entry of $\Psi^{(-r)}$ is equal to the $(i, j)$-th entry of $\Phi\Psi$. 
\begin{align*}
(\Psi_{ij})^{(-r)}&=\biggl( \Bigl(\prod^{i-1}_{l=j}\mu_l\Bigr) L(s_j, \ldots, s_{i-1})\Omega^{s_i+\cdots+s_n}  \biggr)^{(-r)}\\
				 \intertext{by the equation \eqref{lem:r-twist of L} and $\xi_i^{(-r)}=\xi_i$,}
				  &=\Bigl(\prod^{i-1}_{l=j}\mu_l\Bigr)^{(-r)}(\xi_j^r\cdots\xi_{i-1}^r)\sum_{l=j}^{n}\Omega^{s_{l+1}+\cdots+s_{i-1}}\Bigl(\sum_{r\geq m_{i-1}>\cdots >m_l>0}T_{s_l, m_l}(\xi_l)\cdots T_{s_{i-1}, m_{i-1}}(\xi_{i-1})\\
				  &\qquad L(s_l, \ldots, s_{i-1}; \xi_l, \ldots, \xi_{i-1}) \Bigr)(\Omega^{s_i+\cdots+s_n})^{(-r)}\\
\intertext{by the definition of $\mu_l$, we have $\mu_l^{(-r)}=\xi_l^{-r}\mu_l$ and then}
&=\Bigl(\prod^{i-1}_{l=j}\mu_l\Bigr)(\Omega^{s_i+\cdots+s_n})^{(-r)}\sum_{l=j}^{n}\Omega^{s_{l+1}+\cdots+s_{i-1}}\Bigl(\sum_{r\geq m_{i-1}>\cdots >m_l>0}T_{s_l, m_l}(\xi_l)\cdots T_{s_{i-1}, m_{i-1}}(\xi_{i-1})\\
				  &\qquad L(s_l, \ldots, s_{i-1}; \xi_l, \ldots, \xi_{i-1}) \Bigr)
\end{align*}

On the other hand, by the definition of $\Phi$ and $\Psi$, 
\[
(\text{ $i$-th row of $\Phi$})=(a_{i1}, \ldots, a_{ii}, 0, \ldots, 0)
\]
where 
\[
	a_{ij}=\begin{cases}
			&\prod_{l=j}^{i-1}\mu_l\prod_{m=0}^{r-1}\bigl( (t-\theta)^{s_i+\cdots+s_n} \bigr)^{(-m)}\sum_{r\geq m_{i-1}>\cdots>m_j\geq 1}\prod^{i-1}_{g=j}T_{s_g, m_g}\quad (i>j),\\
		    &\prod^{i-1}_{l=j}\mu_l\bigl( (t-\theta)^{s_i+\cdots+s_n} \bigr)^{(-m)}\quad (i=j),
		   \end{cases}
\]
and 
\[
(\text{ $j$-th column of $\Psi$})=(0, \ldots, 0, b_{jj}, \ldots, b_{n+1j})^{\rm tr}
\]
where
\[
	b_{ij}=\Bigl(\prod^{i-1}_{l=j}\mu_l\Bigr)L(s_j, \ldots, s_{i-1})\Omega^{s_i+\cdots+s_n}.
\]
\begin{align*}
(\Phi\Psi)_{ij}&=\biggl\{\prod_{l=j}^{i-1}\mu_l\prod_{m=0}^{r-1}\bigl( (t-\theta)^{s_i+\cdots +s_n} \bigr)^{(-m)}\sum_{r\geq m_{i-1}>\cdots >m_j\geq 1}\prod_{g=j}^{i-1}T_{s_g, m_g}\biggr\}\Omega^{s_j+\cdots+s_n}\\
			   &\quad +\biggl\{\prod^{i-1}_{l=j+1}\mu_l\prod_{m=0}^{r-1}\bigl( (t-\theta)^{s_i+\cdots+s_n} \bigr)^{(-m)}\sum_{r\geq m_{i-1}>\cdots >m_{j+1}\geq 1}\prod_{g=j+1}^{i-1}T_{s_g, m_g}\biggr\}\Bigl(\prod_{l=j}^{j}\mu_l\Bigr) L(s_j)\Omega^{s_{j+1}+\cdots+s_n}\\
			   &\quad \vdots\\
			   &\quad +\prod_{l=i}^{i-1}\mu_l\prod_{m=0}^{r-1}\bigl( (t-\theta)^{s_i+\cdots+s_n} \bigr)^{(-m)}\Bigl(\prod^{i-1}_{l=j}\mu_l\Bigr)L(s_j, \ldots, s_{i-1})\Omega^{s_i+\cdots+s_n}\\
			   \intertext{by using $\prod_{m=0}^{r-1}\Bigl((t-\theta)^{s_i+\cdots +s_n}\Bigr)^{(-m)}\Omega^{s_i+\cdots+s_n}=(\Omega^{s_i+\cdots+s_n})^{(-r)}$,}  
			   &=\biggl\{\Bigl(\prod_{l=j}^{i-1}\mu_l\Bigr)(\Omega^{s_i+\cdots+s_n})^{(-r)}\sum_{r\geq m_{i-1}>\cdots >m_j\geq 1}\prod_{g=j}^{i-1}T_{s_g, m_g}\biggr\}\Omega^{s_j+\cdots+s_{i-1}}\\
			   &\quad +\biggl\{\Bigl(\prod^{i-1}_{l=j+1}\mu_l\Bigr)(\Omega^{s_{i}+\cdots+s_n})^{(-r)}\sum_{r\geq m_{i-1}>\cdots >m_{j+1}\geq 1}\prod_{g=j+1}^{i-1}T_{s_g, m_g}\biggr\}\Bigl(\prod_{l=j}^{j}\mu_l\Bigr) L(s_j)\Omega^{s_{j+1}+\cdots+s_{i-1}}\\
			   &\quad \vdots\\
			   &\quad +(\Omega^{s_{i}+\cdots+s_n})^{(-r)}\prod^{i-1}_{l=j}\mu_lL(s_j, \ldots, s_{i-1})\\
			   &=\biggl\{\Bigl(\prod_{l=j}^{i-1}\mu_l\Bigr)(\Omega^{s_i+\cdots+s_n})^{(-r)}\sum_{r\geq m_{i-1}>\cdots >m_j\geq 1}\prod_{g=j}^{i-1}T_{s_g, m_g}\biggr\}\Omega^{s_j+\cdots+s_{i-1}}\\
			   &\quad +\biggl\{\Bigl(\prod^{i-1}_{l=j}\mu_l\Bigr)(\Omega^{s_{i}+\cdots+s_n})^{(-r)}\sum_{r\geq m_{i-1}>\cdots >m_{j+1}\geq 1}\prod_{g=j+1}^{i-1}T_{s_g, m_g}\biggr\}L(s_j)\Omega^{s_{j+1}+\cdots+s_{i-1}}\\
			   &\quad \vdots\\
			   &\quad +\Bigl(\prod^{i-1}_{l=j}\mu_l\Bigr)(\Omega^{s_{i}+\cdots+s_n})^{(-r)}L(s_j, \ldots, s_{i-1})\\
			   &=\Bigl(\prod^{i-1}_{l=j}\mu_l\Bigr)(\Omega^{s_i+\cdots+s_n})^{(-r)}\sum_{l=j}^{n}\Omega^{s_{l+1}+\cdots+s_{i-1}}\Bigl(\sum_{r\geq m_{i-1}>\cdots >m_l>0}T_{s_l, m_l}(\xi_l)\cdots T_{s_{i-1}, m_{i-1}}(\xi_{i-1})\\
			   &\qquad L(s_l, \ldots, s_{i-1}; \xi_l, \ldots, \xi_{i-1}) \Bigr)
			   \end{align*}
\end{proof}

\section{Linear independence of CMZVs}
Now, we have confirmed that CMZVs do not vanish, and we are satisfied that the sum-shuffle relations are associated with the higher level pre-$t$-motives defined by \eqref{motive:repmatrix}. In this section, we show CMZVs analogue of the direct sum conjecture via those properties and the refined ABP criterion.

\begin{prop}[{\cite[Proposition 3.1.3]{ABP04}}]\label{abpprop1}
Suppose 
\[
	\Phi\in{\rm Mat}_{n}(\overline{k}[t]), \quad \psi\in {\rm Mat}_{n\times 1}(\mathbb{T})
\]
such that 
\[
	{\rm det}\Phi|_{t=0}\neq 0, \quad \psi^{(-1)}=\Phi\psi.
\]
Then,
\[
	\psi\in {\rm Mat}_{n\times 1}(\mathbb{E}).
\]
\end{prop}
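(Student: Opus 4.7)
The plan is to iterate the Frobenius functional equation $\psi^{(-1)} = \Phi\psi$. Rewriting this as $\psi = \Phi^{(1)}\psi^{(1)}$ and iterating yields, for every $N \geq 1$,
\[
\psi = \Phi^{(1)}\Phi^{(2)}\cdots\Phi^{(N)}\,\psi^{(N)},
\]
where each $\Phi^{(i)} \in \Mat_n(\overline{k}[t])$ is a polynomial matrix; this will serve as the structural backbone for both the algebraicity and the growth estimates.

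First I would verify that the coefficients of $\psi$ lie in $\overline{k}$. Writing $\psi = \sum_{i\geq 0} c_i t^i$ and $\Phi = \sum_{j} A_j t^j$ with $A_j \in \Mat_n(\overline{k})$, comparison of coefficients in $\psi^{(-1)} = \Phi\psi$ gives the recursion $c_i^{(-1)} = \sum_{j+l=i} A_j c_l$. The case $i = 0$ reads $c_0^{(-1)} = A_0 c_0$; since $A_0 = \Phi(0)$ is invertible, this determines $c_0$ as the solution of a polynomial system over $\overline{k}$, whence $c_0 \in \overline{k}^n$. Inductively every $c_i$ satisfies a polynomial equation of bounded degree over the field generated by the previous $c_l$ and the entries of $\Phi$, so $c_i\in\overline{k}^n$ for all $i$, and all of them lie in a common finite extension of $k_\infty$.

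For the entire property, let $\rho$ denote the radius of convergence of $\psi$. Since $\psi \in \mathbb{T}^n$, $\rho \geq 1$; and since $\psi^{(N)}$ has coefficients raised to the $q^N$-th power, it has convergence radius $\rho^{q^N}$. The identity $\psi = \Phi_N\psi^{(N)}$, with $\Phi_N := \Phi^{(1)}\cdots\Phi^{(N)}$ entire, forces $\rho \geq \rho^{q^N}$ for every $N$, leaving only $\rho \in \{1,\infty\}$. The main obstacle is excluding $\rho = 1$: here I would continue $\psi$ meromorphically via $\psi = \Phi^{-1}\psi^{(-1)}$, confining its possible poles to the finitely many nonzero zeros of $\det\Phi$, and then use a Frobenius-orbit analysis (exploiting $\det\Phi(0)\neq 0$, so that $0$ is not in the orbit of any candidate pole) to show these candidate poles must all cancel, making $\psi$ entire. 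This delicate last step is precisely the content of \cite[Proposition 3.1.3]{ABP04}, which our argument would follow verbatim.
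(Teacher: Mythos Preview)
The paper does not supply a proof of this proposition; it is simply quoted from \cite{ABP04}. So there is no ``paper's own proof'' to compare against beyond the reference itself. Your sketch does follow the broad contours of the original Anderson--Brownawell--Papanikolas argument (iterate the functional equation, algebraicity of coefficients, then growth control), and your deferral to \cite{ABP04} at the end is in the same spirit as the paper's citation.

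Two points nonetheless deserve flagging. First, your claim that all the $c_i$ lie in a \emph{common} finite extension of $k_\infty$ does not follow from ``each $c_i$ satisfies a polynomial of bounded degree over the field generated by the earlier ones'': bounded degree at each step still permits an infinite tower. In \cite{ABP04} this is handled more carefully. Second, and more substantively, your proposed mechanism for excluding $\rho=1$ does not work as written. The relation $\psi=\Phi^{-1}\psi^{(-1)}$ cannot furnish analytic continuation beyond the disc of radius $\rho$, because $\psi^{(-1)}$ has radius of convergence $\rho^{1/q}\le\rho$; there is nothing on the right-hand side defined past where $\psi$ already lives, so no pole or Frobenius-orbit analysis gets off the ground. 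What actually drives the conclusion is a direct coefficient estimate: comparing coefficients in $\psi=\Phi^{(1)}\psi^{(1)}$ and using $\det\Phi(0)\neq 0$ yields, once $|c_i|$ is sufficiently small (guaranteed eventually by $\psi\in\mathbb{T}$), a bound of the shape $|c_i|\le M^q\max_{1\le j\le d}|c_{i-j}|^q$. This recursion forces $|c_i|$ to decay super-exponentially in $i$, whence $\psi$ is entire. Your final sentence defers to \cite{ABP04} for the ``delicate last step,'' which is acceptable, but the description you give of that step misidentifies the underlying mechanism.
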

This and $\Omega^{(-1)}=(t-\theta)\Omega$ immediately show that
\begin{align}\label{omegaentire}
	\Omega\in\mathbb{E}.
\end{align}

\begin{lem}\label{psie}
Let $\Psi\in{\rm GL}_{n+1}(\overline{k}[[t]])$
be as \eqref{Psi}. Then, the following holds:
\begin{align}\label{lem Psi entire}
	\Psi\in{\rm Mat}_{n+1}(\mathbb{E}).
\end{align}
\end{lem}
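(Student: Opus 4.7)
The plan is to deduce Lemma \ref{psie} as a direct application of a level-$r$ extension of Proposition \ref{abpprop1} to the Frobenius equation $\Psi^{(-r)} = \Phi\Psi$ supplied by Theorem \ref{rat}. The identity $\Psi^{(-r)} = \Phi\Psi$ is verified in the proof of that theorem by a purely entrywise formal computation, so we may use it here without any circularity: we only need that $\Psi \in \mathrm{GL}_{n+1}(\overline{k}\llbracket t \rrbracket)$, which is part of the hypothesis. Membership in $\Mat_{n+1}(\mathbb{T})$ is the remaining analytic input and is established separately by \eqref{Psi gl tate}.

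To set up Proposition \ref{abpprop1}, I would check two facts about $\Phi$. First, $\Phi \in \Mat_{n+1}(\overline{k}[t])$: the diagonal entries $\prod_{i=0}^{r-1}\bigl((t-\theta)^{s_j+\cdots+s_n}\bigr)^{(-i)}$ are evidently polynomials in $t$ over $\overline{k}$, and each strictly subdiagonal entry is an $\overline{\mathbb{F}}_q$-scalar (coming from the $\mu_l$) multiple of a finite sum of products of terms $T_{s_g, m_g}$; each such $T_{s_g, m_g}$ is itself a product of $H_{s_g-1}^{(-m_g)} \in A[t]$, of Frobenius twists of $(t-\theta)^{s_g}$, and of $\overline{\mathbb{F}}_q$-scalars, hence lies in $\overline{k}[t]$. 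Second, $\det\Phi|_{t=0} \neq 0$: since $\Phi$ is lower triangular, $\det \Phi$ is the product of its diagonal entries, and at $t=0$ each factor equals $\prod_{i=0}^{r-1}(-\theta^{1/q^i})^{s_j+\cdots+s_n}$, a nonzero element of $\overline{k}$. With these two checks in place, applying the ABP-type criterion column-by-column to $\Psi$ yields $\Psi \in \Mat_{n+1}(\mathbb{E})$.

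The one step that is not entirely mechanical is that Proposition \ref{abpprop1} is stated for $\sigma^{-1}$-twists, whereas our equation is a $\sigma^{-r}$-twist. The standard remedy is to stack Frobenius shifts: for each column $\psi$ of $\Psi$, form the block vector $\tilde\psi = (\psi, \psi^{(-1)}, \ldots, \psi^{(-r+1)})^{\mathrm{tr}}$, which satisfies $\tilde\psi^{(-1)} = \tilde\Phi\,\tilde\psi$ for an explicit block matrix $\tilde\Phi \in \Mat_{r(n+1)}(\overline{k}[t])$ whose determinant at $t=0$ is, up to Frobenius twists, a product of the powers of $\det\Phi|_{t=0}$ and hence nonzero. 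Proposition \ref{abpprop1} then applies verbatim to $\tilde\psi$, and projecting onto the first block gives $\psi \in \Mat_{(n+1)\times 1}(\mathbb{E})$. The main obstacle is this level-$r$ reduction; the rest is polynomial bookkeeping, and the identity $\Omega \in \mathbb{E}$ recorded in \eqref{omegaentire} serves as the prototypical one-dimensional instance of the same mechanism.
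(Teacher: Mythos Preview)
Your proof is correct and follows essentially the same route as the paper: establish $\Psi\in\Mat_{n+1}(\mathbb{T})$, then invoke Proposition~\ref{abpprop1} column-by-column using the difference equation and the nonvanishing of $\det\Phi$ at $t=0$. Your explicit block-stacking reduction from the level-$r$ equation $\Psi^{(-r)}=\Phi\Psi$ to a level-$1$ system is in fact more careful than the paper's own argument, which writes $\psi_i^{(-1)}=\Phi\psi_i$ and computes $\det\Phi|_{t=0}=(-\theta)^{\sum d_i}$ as if $r=1$, applying Proposition~\ref{abpprop1} without addressing the level discrepancy.
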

\begin{proof}
To apply Proposition \ref{abpprop1}, we first prove that $\Psi\in{\rm Mat}_{n+1}(\mathbb{T})$.
By \eqref{omegaentire}, $\Omega\in\mathbb{T}$. Furthermore,
$\Bigl(\prod_{n=i}^{j}\mu_n\Bigr) L(\mathfrak{s};\boldsymbol{\xi})$ belongs to $\mathbb{T}$, where $\mathfrak{s}=(s_i, \ldots, s_j)$ and $\boldsymbol{\xi}=(\xi_i, \ldots, \xi_j)$ for $1\leq i\leq j\leq r$. Because when $|t|_{\infty}\leq 1$, 
\[
\frac{ ||\mu_i\xi_i^{d_i}H_{s_i-1}^{(d_i)}\cdots\mu_j\xi_j^{d_j}H_{s_j-1}^{(d_j)}||_{\infty} }{||\bigr((t-\theta^q)\cdots(t-\theta^{q^{d_i} })\bigr)^{s_i}\cdots\bigr((t-\theta^q)\cdots(t-\theta^{q^{d_j} })\bigr)^{s_j}||_{\infty}}\rightarrow 0
\]
as $0\leq d_j<\cdots <d_i\rightarrow \infty$ ($d_i$ goes to infinity preserving $0\leq d_j<\cdots <d_i$). Here, we set $||\sum^{m}_{n= 0}a_nt^n||_{\infty}:=\max_{m\geq n \geq 0}\{ |a_n|_{\infty}\}$ for $\sum^{m}_{n=0}a_nt^n\in\overline{k}[t]$. Since the elements of $\overline{\mathbb{F}}_q$ have nothing to do with $||\cdot ||_{\infty}$, the above argument holds by changing $\epsilon_i, \gamma_i$ to $\xi_i, \mu_i$ in the proof of \cite[Lemma 4.3]{H21}.

Thus, it follows that
\begin{align}\label{psitate}
	\Psi\in{\rm Mat}_{n+1}(\mathbb{T}).
\end{align}
 
For each $1\leq i\leq n+1$, the $i$-th column $\psi_{i}\in{\rm Mat}_{n+1\times 1}(\mathbb{T})$ of matrices $\Psi$ and $\Phi\in{\rm Mat}_{n+1}(\overline{k}[t])$ in Theorem \ref{rat} satisfies $\psi_i^{(-1)}=\Phi\psi_i$. Furthermore, $\Phi$ satisfies $({\rm det}\Phi)|_{t=0}=(-\theta)^{\sum_{i=1}^n d_i}\neq 0$, where $d_i=s_i+\cdots+s_n$. 
Therefore, we can apply Proposition \ref{abpprop1} 
and then we obtain \eqref{lem Psi entire}.
\end{proof}
We have
$\Psi\in{\rm GL}_{n+1}(\mathbb{T})$
because there is a matrix 
\begin{align*}
&\Upsilon=\\\nonumber
&\begin{array}{rccccccll}
\ldelim({6}{1pt}[] &\Omega^{-(s_1+\cdots+s_n)} &  &  &  &  & &
\rdelim){6}{1pt}[]  &\\
&-\mu_1L^{\star}(s_1)\Omega^{-(s_2+\cdots+s_n)} & \Omega^{-(s_2+\cdots+s_n)} &  &
&  & & & \\
&\vdots & -\mu_2L^{\star}(s_2)\Omega^{-(s_3+\cdots+s_n)} & \ddots &  &  & & & \\
&\vdots & \vdots & \ddots & \ddots & & & &\\
&-\mu_1\cdots\mu_{n-1}L^{\star}(s_1, \ldots, s_{n-1})\Omega^{-s_n} &
-\mu_2\cdots\mu_{n-1}L^{\star}(s_2, \ldots. s_{n-1})\Omega^{-s_n} &  &
\ddots & \Omega^{-s_n} & & &\\
&-\mu_1\cdots\mu_{n}L^{\star}(s_1, \ldots, s_n)&-\mu_2\cdots\mu_{n}L^{\star}(s_2, \ldots, s_n) & \cdots & \cdots &-\mu_nL^{\star}(s_n) &1 & &
\end{array}\\\nonumber
&\in{\rm Mat}_{n+1}(\mathbb{T})
\end{align*}
where 
\begin{align*}
	L^{\star}(s_j, \ldots, s_i)&:=L^{\star}(s_j, \ldots, s_i; \xi_j, \ldots, \xi_i)\\
				   &:=\sum_{d_j\geq \cdots\geq d_i\geq 0}\xi_j^{d_j}(H_{s_j-1}\Omega^{s_j})^{(d_j)}\cdots\xi_i^{d_i}(H_{s_i-1}\Omega^{s_i})^{(d_i)}.
\end{align*}
The matrix $\Upsilon$ satisfies $\Upsilon\Psi=\Psi\Upsilon=I_{n+1}$ ($I_{n+1}$ is a size $n+1$ identity matrix) by the relations
\begin{align*}
(-1)^iL^{\star}(s_j, \ldots, s_i)&=\sum^{j}_{m=i+1}(-1)^{m-1}L(s_i, \ldots, s_j)L^{\star}(s_j, \ldots, s_m)+(-1)^jL(s_i, \ldots, s_j)\\
(-1)^jL^{\star}(s_j, \ldots, s_i)&=\sum_{m=i+1}^{j}(-1)^{m}L(s_m, \ldots, s_j)L^{\star}(s_{m-1}, \ldots, s_i)+(-1)^iL(s_i, \ldots, s_j)
\end{align*}
which are followed from the inclusion-exclusion principle for $d_j, \ldots, d_i$ similar to the proof of \cite[Lemma 4.1]{GN} and \cite[Lemma 4.2.1]{CM19}. 
Therefore 
\begin{align}\label{Psi gl tate}
\Psi\in\GL_{n+1}(\mathbb{T}).
\end{align}

Chang gave the refinement of the ABP criterion in \cite{C09} and stated it for the level $r$ pre-$t$-motive in the following form in \cite[\S 6.1]{C20}.
\begin{thm}[{\cite[Theorem 1.2]{C09}}]\label{c09thm}
Fix $r\in\mathbb{N}$. Fix a matrix $\Phi\in{\rm Mat}_{n}(\overline{k}[t])$ such that $\mathrm{det}(\Phi)$ is a polynomial in $t$ with $\mathrm{det}(\Phi)|_{t=0}\neq 0$. Fix a column vector $\psi=(\psi_1, \ldots, \psi_n)^{tr}\in{\rm Mat}_{n\times 1}(\mathbb{T})$ satisfying $\psi^{(-r)}=\Phi\psi$. Let $f\in\overline{k}^{\times}$ satisfy $f\notin\overline{\mathbb{F}}_q$ and $\mathrm{det}(\Phi|_{t=f^{q^{-ri}}})\neq 0$ for all $i\in\mathbb{N}$. Then, we have the following:
\begin{enumerate}
     \item [(i)] For every vector $\rho\in\mathrm{Mat}_{1\times n}(\overline{k})$ such that $(\rho\psi)|_{t=f}=0$, there exists a row vector $P\in\mathrm{Mat}_{1\times n}(\overline{k}[t])$ such that $P|_{t=f}=\rho$ and $P\rho=0$.  
     \item[(ii)] $\mathrm{tr.deg}_{\overline{k}(t)}\overline{k}(t)(\psi_1, \ldots, \psi_n)=\mathrm{tr.deg}_{\overline{k}}\overline{k}(\psi_1|_{t=f}, \ldots, \psi_n|_{t=f})$
\end{enumerate}
\end{thm}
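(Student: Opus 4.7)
The plan is to adapt the strategy that underlies the original ABP criterion (Proposition \ref{abpprop1}), extended to accommodate both the general twisting exponent $r$ and the flexibility of the specialization point $f$. I would first establish the polynomial-lifting statement (i) and then deduce the transcendence-degree equality (ii) from it via a standard monomial reduction.

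For part (i), suppose $\rho \in \Mat_{1 \times n}(\overline{k})$ satisfies $(\rho\psi)|_{t=f} = 0$, and set $E := \rho\psi \in \mathbb{T}$. The standard $\mathbb{E}$-entirety argument (the level-$r$ analogue of Proposition \ref{abpprop1}, obtained by flattening the $\sigma^{r}$-system to a level-$1$ companion system) places $\psi$ inside $\Mat_{n \times 1}(\mathbb{E})$, so $E$ is entire with controlled coefficient growth. The functional equation $\psi^{(-r)} = \Phi\psi$ gives $E^{(-r)} = \rho^{(-r)}\Phi\psi$; since $E(f) = 0$ forces $E^{(-r)}(f^{q^{-r}}) = 0$, one obtains $\rho^{(-r)}\Phi(f^{q^{-r}})\psi(f^{q^{-r}}) = 0$, and iterating produces an infinite family of $\overline{k}$-linear relations $\rho^{(-rj)}\Phi^{(-r(j-1))}(f^{q^{-r}})\cdots \Phi(f^{q^{-rj}})\psi(f^{q^{-rj}}) = 0$ for $j \geq 1$. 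The hypothesis $\det(\Phi|_{t=f^{q^{-rj}}}) \neq 0$ is precisely what keeps each of these iterated row vectors nonzero, so one truly sees a nontrivial sequence of vanishings at $\{f^{q^{-rj}}\}_{j\geq 0}$.

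The heart of the argument is then to construct a polynomial lift $P = \rho + (t-f)Q$ with $Q \in \Mat_{1 \times n}(\overline{k}[t])$ of controlled bi-degree such that $P\psi \equiv 0$. One produces $Q$ by Siegel's lemma in the function-field setting, arranging it to kill a large finite number of the propagated vanishings with small height, and then invokes a Liouville-type inequality to upgrade the resulting finite cancellation into an identity in $\mathbb{E}$. The condition $f \notin \overline{\mathbb{F}}_q$ is exactly what ensures the iterates $f^{q^{-rj}}$ have heights growing fast enough for the Liouville bound to close against the $\mathbb{E}$-growth of $\psi$; without it, the orbit $\{f^{q^{-rj}}\}$ would be finite and the estimate would fail.

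For (ii), the inequality $\trdeg_{\overline{k}} \overline{k}(\psi_i|_{t=f}) \leq \trdeg_{\overline{k}(t)} \overline{k}(t)(\psi_i)$ is automatic from specialization. For the reverse inequality I would replace $\psi$ by the column vector of all monomials of some large fixed total degree in $\psi_1,\dots,\psi_n$: any $\overline{k}$-polynomial relation among the specializations $\psi_i|_{t=f}$ becomes a $\overline{k}$-linear relation among these monomials at $t=f$, which lifts by (i) to a polynomial relation among the $\psi_i$ themselves. The symmetric-power matrix built from $\Phi$ satisfies the required determinant hypothesis, so (i) applies. I expect the main obstacle to lie in the Liouville/Siegel step of (i): one must balance the growth of coefficients of $\psi$ in its $\mathbb{E}$-expansion against the heights of $f^{q^{-rj}}$, and the extra generality in $r$ and $f$ makes this bookkeeping strictly more delicate than in the original ABP case $r=1,\ f=\theta$.
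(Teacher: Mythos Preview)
The paper does not prove this theorem: it is quoted verbatim from \cite{C09} (as the label indicates) and used as a black box throughout \S4 and the Appendix. So there is no ``paper's own proof'' to compare against. Your outline is a faithful sketch of the argument actually carried out in \cite{C09} (which in turn is the refinement of \cite{ABP04} to arbitrary $r$ and arbitrary specialization point $f$): entirety of $\psi$ via the level-$r$ analogue of Proposition~\ref{abpprop1}, propagation of zeros along the orbit $\{f^{q^{-rj}}\}$ using the invertibility hypothesis on $\Phi$, a Siegel/Liouville step to build the polynomial lift $P$, and then a symmetric-power reduction for (ii). That is the correct shape, and your remarks about where $f\notin\overline{\mathbb{F}}_q$ and the determinant condition enter are accurate.

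One small note: the statement as printed in the paper contains a typo --- the conclusion of (i) should read $P\psi=0$, not $P\rho=0$ --- and you have silently (and correctly) read it that way.
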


The deformation series $L(\mathfrak{s})$ of CMZVs and the $t$-motivic interpretation enable us to obtain the following property. 

\begin{lem}\label{cmzproperty}
For a given CMZV $\zeta_{A}(\mathfrak{s};\boldsymbol{\xi})$, we assume that ${\rm wt}(\mathfrak{s})=w$, ${\rm dep}(\mathfrak{s})=n$ and each entry of $\boldsymbol{\xi}$ belongs to $\mathbb{F}_{q^r}^{\times}$. Then, there exists $\Phi\in{\rm Mat}_{n+1}(\overline{k}[t])$ and $\psi\in{\rm Mat}_{(n+1)\times 1}(\mathbb{E})$ with $n\geq 1$ such that:
\begin{enumerate}
     \item [(i)] $\psi^{(-r)}=\Phi\psi$ holds and $\Phi$ satisfies the condition of Theorem \ref{c09thm}$;$
     \item[(ii)] the last column of $\Phi$ is of the form $(0, \ldots, 0, 1)^{{\rm tr}};$
     \item[(iii)] for some $a\in\overline{\mathbb{F}}_q^{\times}$ and $b\in k^{\times}$, $\psi(\theta)$ is of the form with specific first and last entries
                     \[
                         \psi(\theta)=\Biggl(\frac{1}{\tilde{\pi}^w}, \ldots, a\frac{b\zeta_{A}(\mathfrak{s};\boldsymbol{\xi})}{\tilde{\pi}^w}\Biggr)^{\rm tr};
                     \]
     \item[(iv)] for some $N\in\mathbb{N}$ divisible by $r$ and some $c\in \overline{\mathbb{F}}_q^{\times}$, $\psi(\theta^{q^N})$ is of the form  
                      \[
                         \psi(\theta^{q^N})=\Biggl(0, \ldots, 0,  ac^{N}\Bigl(\frac{b\zeta_{A}(\mathfrak{s};\boldsymbol{\xi})}{\tilde{\pi}^w}\Bigr)^{q^{N}}\Biggr)^{\rm tr}.
                     \]
 \end{enumerate}
\begin{proof}
One can immediately check that $\Phi$ and $\psi_1$ such that $\Psi=(\psi_1, \ldots, \psi_{n+1})$ satisfies conditions (i)-(iii). Thus, we prove that condition (iv) holds. 
\begin{align*}
L(\mathfrak{s};\boldsymbol{\xi})&=L(\mathfrak{s})=\sum_{d_1>\cdots>d_n\geq 0}\xi_1^{d_1}\cdots\xi_n^{d_n}(H_{s_1-1}\Omega^{s_1})^{(d_1)}\cdots(H_{s_n-1}\Omega^{s_n})^{(d_n)}\\
&=\sum_{d_1>\cdots>d_n\geq 0}\frac{\xi_1^{d_1}\cdots\xi_n^{d_n}H_{s_1-1}^{(d_1)}\Omega^{s_1}\cdots H_{s_n-1}^{(d_n)}\Omega^{s_n}}{\bigl((t-\theta^q)\cdots(t-\theta^{q^{d_1}})\bigr)^{s_1}\cdots\bigl( (t-\theta^q)\cdots(t-\theta^{q^{d_n}}) \bigr)^{s_n}}
\end{align*}
We consider the decomposition $L(\mathfrak{s})=L(\mathfrak{s})_{\geq N}+L(\mathfrak{s})_{<N}$, where 
\begin{align*}
	L(\mathfrak{s})_{\geq N}&:=\sum_{d_1>\cdots>d_n\geq N}\frac{\xi_1^{d_1}\cdots\xi_n^{d_n}H_{s_1-1}^{(d_1)}\Omega^{s_1}\cdots H_{s_n-1}^{(d_n)}\Omega^{s_n}}{\bigl((t-\theta^q)\cdots(t-\theta^{q^{d_1}})\bigr)^{s_1}\cdots\bigl( (t-\theta^q)\cdots(t-\theta^{q^{d_n}}) \bigr)^{s_n}}\\
	L(\mathfrak{s})_{<N}&:=\sum_{\substack{d_1>\cdots>d_n\geq 0\\d_n<N}}\frac{\xi_1^{d_1}\cdots\xi_n^{d_n}H_{s_1-1}^{(d_1)}\Omega^{s_1}\cdots H_{s_n-1}^{(d_n)}\Omega^{s_n}}{\bigl((t-\theta^q)\cdots(t-\theta^{q^{d_1}})\bigr)^{s_1}\cdots\bigl( (t-\theta^q)\cdots(t-\theta^{q^{d_n}}) \bigr)^{s_n}}.
\end{align*}

By following the proof of \cite[Lemma 4.4]{H21}, we obtain $\bigl\{L(s_1, \ldots, s_i)\Omega^{s_{i+1}+\cdots+s_n}\bigr\}|_{t=\theta^{q^N}}=0\ (0\leq i\leq N)$ and $L(\mathfrak{s})|_{t=\theta^{q^N}}=(\xi_1\cdots\xi_n)^{N}\Bigl(\frac{ \Gamma_{s_1}\cdots\Gamma_{s_n} }{ \tilde{\pi}^{w} }\zeta_A(\mathfrak{s};\boldsymbol{\xi})\Bigr)^{q^N}$. We conclude that $\psi_1|_{t=\theta^{q^N}}$ satisfies condition (iv) with $a=\mu_1\cdots\mu_n$, $b=\Gamma_{s_1}\cdots\Gamma_{s_n}$ and $c=\xi_1\cdots\xi_n$.
\end{proof}
\end{lem}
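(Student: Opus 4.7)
The plan is to take as my $\Phi$ and $\psi$ precisely the objects already constructed in Theorem~\ref{rat}: namely $\Phi$ given by \eqref{motive:repmatrix} and $\psi:=\psi_1$, the first column of the matrix $\Psi$ from \eqref{Psi}. Lemma~\ref{psie} supplies $\psi\in\mathrm{Mat}_{(n+1)\times 1}(\mathbb{E})$, and the relation $\Psi^{(-r)}=\Phi\Psi$ proved in Theorem~\ref{rat} yields $\psi^{(-r)}=\Phi\psi$, so the only substantial checks left are the $\det\Phi$ hypothesis in (i) and the explicit specializations of $\psi$ at $t=\theta$ and at $t=\theta^{q^{N}}$.

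Since $\Phi$ is lower-triangular its determinant factors as $\det\Phi=\prod_{j=1}^{n}\prod_{i=0}^{r-1}(t-\theta^{q^{i}})^{s_j+\cdots+s_n}\in\overline{k}[t]$, which is nonzero at $t=0$. Taking $f=\theta$, the zeros of $\det\Phi$ lie at $t=\theta^{q^{m}}$ for $0\le m\le r-1$, while the substitutions $t=\theta^{q^{-ri}}$ with $i\ge 1$ avoid all such values (comparing Frobenius exponents); this verifies the hypotheses of Theorem~\ref{c09thm} and settles (i). Condition (ii) is visible in \eqref{motive:repmatrix}. For (iii), evaluate $\psi_1$ at $t=\theta$: the first entry is $\Omega^{w}|_{t=\theta}=1/\tilde{\pi}^{w}$, while the last entry is $\mu_1\cdots\mu_n\,L(\mathfrak{s})|_{t=\theta}=(\mu_1\cdots\mu_n)\,\tfrac{\Gamma_{s_1}\cdots\Gamma_{s_n}}{\tilde{\pi}^{w}}\,\zeta_A(\mathfrak{s};\boldsymbol{\xi})$ by \eqref{deformcmzvs}; hence (iii) holds with $a=\mu_1\cdots\mu_n$ and $b=\Gamma_{s_1}\cdots\Gamma_{s_n}$.

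The main content is (iv). Choose any $N\in\mathbb{N}$ with $r\mid N$ (for instance $N=r$). Because the product defining $\Omega$ contains the factor $1-t/\theta^{q^{N}}$, one has $\Omega|_{t=\theta^{q^{N}}}=0$; since every intermediate entry of $\psi_1$ carries a positive power of $\Omega$ as a factor, all intermediate entries and the first entry vanish at $t=\theta^{q^{N}}$. It remains to compute $\mu_1\cdots\mu_n\,L(\mathfrak{s})|_{t=\theta^{q^{N}}}$; for this I split $L(\mathfrak{s})=L(\mathfrak{s})_{<N}+L(\mathfrak{s})_{\ge N}$ as in the excerpt. Every summand of $L(\mathfrak{s})_{<N}$ has $d_n<N$, so the factor $(\Omega^{s_n})^{(d_n)}$ inherits a zero at $t=\theta^{q^{N}}$ from $\Omega$ (since $\Omega^{(d)}|_{t=\theta^{q^{N}}}=0$ whenever $d<N$); this forces $L(\mathfrak{s})_{<N}|_{t=\theta^{q^{N}}}=0$.

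For $L(\mathfrak{s})_{\ge N}$, substitute $d_i=d_i'+N$ and pull the common $(N)$-twist out of the sum, using that $\xi_i\in\mathbb{F}_{q^r}^{\times}$ and $r\mid N$ (so $\xi_i^{q^{N}}=\xi_i$, which lets the $\xi_i$-scalars pass through the twist). This produces $L(\mathfrak{s})_{\ge N}=(\xi_1\cdots\xi_n)^{N}\,L(\mathfrak{s})^{(N)}$. Specializing at $t=\theta^{q^{N}}$ and applying the identity $f^{(N)}|_{t=\theta^{q^{N}}}=(f|_{t=\theta})^{q^{N}}$ together with \eqref{deformcmzvs} yields
\[
L(\mathfrak{s})|_{t=\theta^{q^{N}}}=(\xi_1\cdots\xi_n)^{N}\Bigl(\frac{\Gamma_{s_1}\cdots\Gamma_{s_n}}{\tilde{\pi}^{w}}\,\zeta_A(\mathfrak{s};\boldsymbol{\xi})\Bigr)^{q^{N}},
\]
so (iv) holds with $c=\xi_1\cdots\xi_n$. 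The one delicate point will be the interplay of the Frobenius twist with the specialization at $t=\theta^{q^{N}}$: specifically, pinning down the vanishing of the terms in $L(\mathfrak{s})_{<N}$ through the $\Omega$-factor inside $(H_{s_n-1}\Omega^{s_n})^{(d_n)}$ (this is the content of the proof of \cite[Lemma 4.4]{H21}), and noting that the divisibility $r\mid N$ is exactly what is needed to commute the $\xi_i$-scalars past the $(N)$-twist.
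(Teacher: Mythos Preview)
Your proposal is correct and follows essentially the same approach as the paper: both take $\Phi$ from \eqref{motive:repmatrix} and $\psi$ to be the first column of $\Psi$ in \eqref{Psi}, dispatch (i)--(iii) quickly, and for (iv) decompose $L(\mathfrak{s})=L(\mathfrak{s})_{<N}+L(\mathfrak{s})_{\ge N}$, with the $\ge N$ part rewritten as $(\xi_1\cdots\xi_n)^{N}L(\mathfrak{s})^{(N)}$ and evaluated via $f^{(N)}|_{t=\theta^{q^N}}=(f|_{t=\theta})^{q^N}$, yielding the same constants $a=\mu_1\cdots\mu_n$, $b=\Gamma_{s_1}\cdots\Gamma_{s_n}$, $c=\xi_1\cdots\xi_n$. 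Where the paper simply cites \cite[Lemma~4.4]{H21} for the vanishing of the intermediate entries and of $L(\mathfrak{s})_{<N}$ at $t=\theta^{q^N}$, you spell out the mechanism (the $\Omega$-factor); the only implicit point is that each $L(s_1,\ldots,s_i)$ is regular at $\theta^{q^N}$, which follows since $\mu_1\cdots\mu_i L(s_1,\ldots,s_i)\in\mathbb{E}$ by the depth-$i$ analogue of Lemma~\ref{psie}.
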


Next, we show that the monomials of the CMZVs satisfy Lemma \ref{cmzproperty} by using the method used in the proof of \cite[Proposition 3.4.4]{C14}.
\begin{prop}\label{monocmzprop}
We let $\zeta_A(\mathfrak{s}_1;\boldsymbol{\xi}_1), \ldots, \zeta_A(\mathfrak{s}_n;\boldsymbol{\xi}_n)$ be CMZVs with weights $w_1,$ $\ldots, w_n$ and let $m_1, \ldots, m_n\in\mathbb{Z}_{\geq 0}$. Then, there exist $r\in\mathbb{N}$, matrices $\Phi\in{\rm Mat}_d(\overline{k}[t])$ and $\psi\in{\rm Mat}_{d\times 1}(\mathbb{E})$ with $d\geq 2$ such that $(\Phi, \psi, \prod_{i=1}^n\zeta_A(\mathfrak{s}_i;\boldsymbol{\xi}_i)^{m_i}, r)$ satisfies (i)-(iv) in Lemma \ref{cmzproperty}. 
\end{prop}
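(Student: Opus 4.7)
The plan is to build the data $(\Phi, \psi, r)$ for the monomial $\zeta := \prod_{i=1}^{n} \zeta_A(\mathfrak{s}_i;\boldsymbol{\xi}_i)^{m_i}$ by a tensor product construction from the per-factor data. First, I would apply Lemma \ref{cmzproperty} to each $\zeta_A(\mathfrak{s}_i;\boldsymbol{\xi}_i)$ to obtain $(\Phi_i, \psi_i, r_i)$ fulfilling (i)--(iv), together with the associated constants $a_i \in \overline{\mathbb{F}}_q^{\times}$, $b_i \in k^{\times}$, $c_i \in \overline{\mathbb{F}}_q^{\times}$ and exponent $N_i \in r_i \mathbb{N}$. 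Setting $r := \operatorname{lcm}(r_1, \ldots, r_n)$ and replacing each $\Phi_i$ by the iterated product $\Phi_i^{(-(r-r_i))} \cdots \Phi_i^{(-r_i)} \Phi_i$ yields an equivalent level-$r$ system $\psi_i^{(-r)} = \Phi_i \psi_i$ in which the last column remains $(0, \ldots, 0, 1)^{\mathrm{tr}}$ and the determinantal nonvanishing conditions of Theorem \ref{c09thm} are preserved (both being stable under Frobenius twists and matrix multiplication).

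Next, I would form the tensor product $\tilde{\Phi} := \bigotimes_{i=1}^{n} \Phi_i^{\otimes m_i}$ and $\tilde{\psi} := \bigotimes_{i=1}^{n} \psi_i^{\otimes m_i}$, choosing the tensor basis ordering so that the first and last coordinates of $\tilde{\psi}$ are the tensors of the individual first and last coordinates. The multiplicativity of the Frobenius twist, $(A \otimes B)^{(-r)} = A^{(-r)} \otimes B^{(-r)}$, yields $\tilde{\psi}^{(-r)} = \tilde{\Phi} \tilde{\psi}$, and since $\mathbb{E}$ is a ring the entries of $\tilde{\psi}$ still lie in $\mathbb{E}$. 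Conditions (i)--(iii) then transfer directly: the determinantal condition follows from $\det(A \otimes B) = (\det A)^{\dim B}(\det B)^{\dim A}$ together with the individual nonvanishings; the last column of $\tilde{\Phi}$ is the tensor of the last columns, hence $(0, \ldots, 0, 1)^{\mathrm{tr}}$; the first entry of $\tilde{\psi}$ equals $\Omega^w$ with $w = \sum_{i} m_i w_i$, specializing to $1/\tilde{\pi}^w$ at $t = \theta$, while the last entry specializes there to $\prod_i (a_i b_i \zeta_A(\mathfrak{s}_i;\boldsymbol{\xi}_i)/\tilde{\pi}^{w_i})^{m_i} = ab\zeta/\tilde{\pi}^w$ with $a := \prod_i a_i^{m_i} \in \overline{\mathbb{F}}_q^{\times}$ and $b := \prod_i b_i^{m_i} \in k^{\times}$. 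For condition (iv) I would take $N$ to be a common multiple of $N_1, \ldots, N_n$ that remains divisible by $r$; then each $\psi_i(\theta^{q^N})$ has only its last entry nonzero, so only the ``all-last'' tensor coordinate of $\tilde{\psi}(\theta^{q^N})$ survives, with value $\prod_i (a_i c_i^N (b_i \zeta_A(\mathfrak{s}_i;\boldsymbol{\xi}_i)/\tilde{\pi}^{w_i})^{q^N})^{m_i} = ac^N(b\zeta/\tilde{\pi}^w)^{q^N}$ for $c := \prod_i c_i^{m_i} \in \overline{\mathbb{F}}_q^{\times}$.

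The main technical point requiring care is the consistent bookkeeping of the tensor-product basis ordering so that the distinguished ``first'' and ``last'' coordinates across the various $\psi_i$'s truly line up with the global first and last coordinates of $\tilde{\psi}$; a subsidiary concern is confirming that the proof of Lemma \ref{cmzproperty}(iv) provides a valid $N$ for every sufficiently large multiple of $r$, so that a single $N$ can be chosen that works across all factors simultaneously. Both are essentially multilinear-algebra bookkeeping matters, but they require explicit verification before the tensor-product construction can be declared complete.
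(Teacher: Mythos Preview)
Your proposal is correct and follows essentially the same route as the paper: apply Lemma \ref{cmzproperty} to each factor, set $r=\operatorname{lcm}(r_1,\ldots,r_n)$, and take the Kronecker (tensor) product $\Phi=\Phi_1^{\otimes m_1}\otimes\cdots\otimes\Phi_n^{\otimes m_n}$, $\psi=\psi_1^{\otimes m_1}\otimes\cdots\otimes\psi_n^{\otimes m_n}$. Your write-up is in fact more explicit than the paper's (which leaves the level-raising and the per-condition checks implicit), and your two flagged bookkeeping points are indeed routine: the standard Kronecker ordering already places the first/last coordinates correctly, and the proof of Lemma \ref{cmzproperty}(iv) shows the stated form of $\psi(\theta^{q^N})$ holds for every $N$ divisible by $r$, so a common $N$ exists.
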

\begin{proof}
We take 4-tuple $(\Phi_i, \psi_i, \zeta_A({\mathfrak s}_i;{\boldsymbol \xi}_i), r_i)$, which satisfies Lemma \ref{cmzproperty} for each $i$. Then, we consider the Kronecker product $\otimes$ (see \cite[Chapter 8]{Sc}) of $\Phi_i$ and $\psi_i$ as follows:
\[
	\Phi:=\Phi_1^{\otimes m_1}\otimes\cdots\otimes\Phi_n^{\otimes m_n},\quad \psi:=\psi_1^{\otimes m_1}\otimes\cdots\otimes\psi_n^{\otimes m_n}.
\]
By our assumption, $(\Phi_i, \psi_i, \zeta_A({\mathfrak s}_i;{\boldsymbol \xi}_i), r_i)$ satisfies Lemma \ref{cmzproperty}, and thus, by using the property of the Kronecker product, which involves matrix multiplication (cf. \cite[Theorem 7.7]{Sc}) and setting $r={\rm lcm}(r_1, \ldots, r_n )$, the 4-tuple $(\Phi, \psi, \prod_{i=1}^n\zeta_A(\mathfrak{s}_i;\boldsymbol{\xi}_i)^{m_i}, r)$ does so.  
\end{proof}

\begin{defn}
Let $\zeta_A({\mathfrak s}_1;{\boldsymbol \xi}_1 ), \ldots, \zeta_A({\mathfrak s}_n;{\boldsymbol \xi}_n )$ be CMZVs of ${\rm wt}({\mathfrak s}_i)=w_i$ $(i=1, \ldots, n)$. Because $m_1, \ldots, m_n\in\mathbb{Z}_{\geq0}$ is not all zero, we define the total weight of the monomial $\zeta_A({\mathfrak s}_1;{\boldsymbol \xi}_1 )^{m_1} \cdots \zeta_A({\mathfrak s}_n;{\boldsymbol \xi}_n )^{m_n}$ as 
\[
	\sum^n_{i=1}m_iw_i.
\]
For $w\in\mathbb{N}$, we denote $CZ_{w}$ as the set of monomials of CMZVs with total weight $w$.  
\end{defn}

\begin{thm}\label{linindcmz}
Let $w_1, \ldots, w_l\in\mathbb{N}$ be distinct. We suppose that $V_i$ is a $k'$-linearly independent subset of $CZ_{w_i}$ for $i=1, \ldots, l$. Then, the following union
\[
    \{ 1 \}\cup\bigcup_{i=1}^{l}V_{i}
\]
is a linearly independent set over $\overline{k}$, that is, there are no nontrivial $\overline{k}$-linear relations among the elements of $\{ 1 \}\cup\bigcup_{i=1}^{l}V_{i}$.
\end{thm}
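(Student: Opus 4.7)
The plan is to assume for contradiction a nontrivial $\overline{k}$-linear relation
\[
a_0 + \sum_{i=1}^{l}\sum_{v\in V_i} a_{i,v}\, v = 0,
\]
with $a_0, a_{i,v}\in\overline{k}$ not all zero, and to reduce it to a $k'$-linear relation inside each weight component $V_i$; the hypothesis on $V_i$ then forces every $a_{i,v}=0$, and finally $a_0=0$.

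First I would invoke Proposition \ref{monocmzprop} for each monomial $v\in V_i$ to obtain a 4-tuple $(\Phi_v,\psi_v,v,r_v)$ satisfying conditions (i)--(iv) of Lemma \ref{cmzproperty}. Setting $r:=\operatorname{lcm}\{r_v\}_{v}$ and rewriting each $\Phi_v$ at the common level $r$, I would assemble them into a block-diagonal $\Phi$ and a stacked column $\psi$, so that $\psi^{(-r)}=\Phi\psi$, $\det\Phi|_{t=0}\neq 0$, and properties (ii)--(iv) hold block-by-block. Via (iii), after clearing a common power of $\tilde\pi$, the assumed linear relation translates into a $\overline{k}$-linear dependence $\rho\cdot\psi(\theta)=0$ among certain entries of $\psi(\theta)$.

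Next I would apply Chang's refined ABP criterion (Theorem \ref{c09thm}(i)) with $f=\theta$, which produces $P\in\mathrm{Mat}_{1\times d}(\overline{k}[t])$ with $P|_{t=\theta}=\rho$ and the functional identity $P\psi=0$ in $\mathbb{T}$. Specializing this identity at $t=\theta^{q^{N}}$ for $N$ a positive multiple of $r$ and invoking property (iv) of each block yields, up to nonzero constants, a relation of the form
\[
\sum_{i,v} P_v|_{t=\theta^{q^{N}}}\cdot \left(\frac{b_v\, v}{\tilde\pi^{w_i}}\right)^{q^{N}} = 0.
\]

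The main obstacle, and the technical heart of the argument, is separating this identity by weight. Comparing the $q^{N}$-th power of the original relation with the specialization above, the distinct $\tilde\pi^{w_i q^{N}}$-scalings (exploiting the transcendence of $\tilde\pi$ over $k'$ together with the fact that the weights $w_i$ are pairwise distinct) force the identity to decompose into weight-homogeneous pieces; each piece, after clearing $\tilde\pi$-factors, becomes a $k'$-linear relation among the monomials in the corresponding $V_i$. By the $k'$-linear independence of $V_i$, every $a_{i,v}$ vanishes, and then $a_0=0$. The detailed induction on the maximal weight and the bookkeeping of the Frobenius and $\tilde\pi$-factors would follow the arguments of \cite[\S4]{C14} and \cite[\S4]{H21}, adapted to the level-$r$ pre-$t$-motive framework of Theorem \ref{rat}; this is precisely what the paper defers to the Appendix.
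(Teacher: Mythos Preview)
Your outline has the right shape but misses the two mechanisms that make the argument work, and the substitutes you propose do not hold up.

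\textbf{Weight separation.} You assemble the blocks $\psi_v$ directly and say that ``after clearing a common power of $\tilde\pi$'' the assumed relation becomes $\rho\cdot\psi(\theta)=0$ with $\rho$ over $\overline{k}$. But by Lemma~\ref{cmzproperty}(iii) the entries of the block for $v\in V_i$ carry a factor $1/\tilde\pi^{w_i}$, and these exponents differ from block to block; since $\tilde\pi\notin\overline{k}$ there is no way to write the relation $a_0+\sum a_{i,v}v=0$ as a $\overline{k}$-linear combination of those entries. The paper's device is to replace $(\Phi_{ij},\psi_{ij})$ by $((t-\theta)^{w_l-w_i}\Phi_{ij},\ \Omega^{w_l-w_i}\psi_{ij})$ so that every block has the \emph{same} factor $1/\tilde\pi^{w_l}$ at $t=\theta$. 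This same twist is what isolates the top weight: at $t=\theta^{q^N}$ the factor $\Omega^{w_l-w_i}$ vanishes for every $i<l$, so only the $V_l$-terms survive in $({\bf F}\tilde\psi)|_{t=\theta^{q^N}}=0$. Your alternative, invoking ``distinct $\tilde\pi^{w_iq^N}$-scalings'' and the transcendence of $\tilde\pi$, does not separate the weights: the relation you write down mixes the unknown quantities $v^{q^N}$ with the $\tilde\pi$-powers, and concluding that each weight-piece vanishes separately is essentially the statement you are trying to prove.

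\textbf{Descent from $\overline{k}$ to $k'$.} Even once you have a nontrivial $\overline{k}$-relation among the elements of $V_l$, you still need to show it is a $k'$-relation; this is the content of Lemma~\ref{laslem}. The ABP criterion only produces $P\in\mathrm{Mat}_{1\times d}(\overline{k}[t])$, and specializing at $\theta$ or $\theta^{q^N}$ lands you in $\overline{k}$, not in $k'$. The paper obtains $k'$-coefficients by normalizing so that one entry of ${\bf F}'$ equals $1$, then using $({\bf F}'-{\bf F}'^{(-r)}\Phi)\psi=0$ together with the shape of the last columns of the $\Phi_j$ to force ${f}'_{jd_j}={f}'^{(-r)}_{jd_j}$, hence ${f}'_{jd_j}\in\mathbb{F}_{q^r}(t)\subset k'(t)$. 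Your proposal skips this step entirely: the claim that ``each piece \dots\ becomes a $k'$-linear relation'' is unjustified.

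In short, the missing ideas are the $\Omega^{w_l-w_i}$-twist of the system (which simultaneously normalizes the $\tilde\pi$-power at $t=\theta$ and kills the lower weights at $t=\theta^{q^N}$) and the Frobenius-fixed-coefficient argument of Lemma~\ref{laslem}. Both are already in \cite{C14,H21}, but your sketch replaces them with heuristics that do not go through.
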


In the proof of this theorem, we are careful with the constant multiple by the elements in $\overline{\mathbb{F}}_q$, but the idea is basically the same as \cite{C14} and \cite{H21}. Thus, here, we give the outline of the proof and leave the details in the Appendix.

\begin{itemize}
\item[$\bullet$ Step 1] Assume on the contrary that $V_{l}$ is a $\overline{k}$-linearly dependent set.

\item[$\bullet$ Step 2] Show that $V_l$ is an $k'$-linearly dependent set by using Theorem \ref{c09thm}.

\item[$\bullet$ Step 3] Show that $V_l$ is a $k'$-linearly dependent set by Steps 1 and 2 and Theorem \ref{c09thm}.

\end{itemize} 
The above theorem provides the following corollaries. 
As a $l=1$ case of Theorem \ref{linindcmz}, we get the following:

\begin{cor}
For each $\mathfrak{s}=(s_1, \ldots, s_n)\in\mathbb{N}$ and $\boldsymbol{\xi}=(\xi_1, \ldots, \xi_n)\in(\overline{\mathbb{F}}_q^{\times})^n$, $\zeta(\mathfrak{s}; \boldsymbol{\xi})$ is transcendental over $k$.
\end{cor}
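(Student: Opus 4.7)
The plan is to deduce this corollary as an immediate $l=1$ specialization of Theorem \ref{linindcmz}. Writing $w := \wt(\mathfrak{s}) = s_1 + \cdots + s_n$, the element $\zeta_A(\mathfrak{s}; \boldsymbol{\xi})$ is itself a monomial of total weight $w$ and hence lies in $CZ_w$ by definition.

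First I would invoke Theorem \ref{nonvan}, which guarantees $\zeta_A(\mathfrak{s}; \boldsymbol{\xi}) \neq 0$. This makes the singleton $V_1 := \{\zeta_A(\mathfrak{s}; \boldsymbol{\xi})\}$ automatically $k'$-linearly independent, since it consists of a single nonzero vector. Next I would apply Theorem \ref{linindcmz} with $l = 1$, $w_1 = w$, and $V_1$ as above, to conclude that $\{1, \zeta_A(\mathfrak{s}; \boldsymbol{\xi})\}$ is $\overline{k}$-linearly independent. Equivalently, $\zeta_A(\mathfrak{s}; \boldsymbol{\xi}) \notin \overline{k}$: any purported identity $\zeta_A(\mathfrak{s}; \boldsymbol{\xi}) = \alpha$ with $\alpha \in \overline{k}$ would yield the nontrivial relation $\alpha \cdot 1 - \zeta_A(\mathfrak{s}; \boldsymbol{\xi}) = 0$, contradicting the linear independence.

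Finally, since $\overline{k}$ is by definition the algebraic closure of $k$ inside $\mathbb{C}_{\infty}$, the condition $\zeta_A(\mathfrak{s}; \boldsymbol{\xi}) \notin \overline{k}$ is exactly the statement that $\zeta_A(\mathfrak{s}; \boldsymbol{\xi})$ is transcendental over $k$. There is no genuine obstacle at this stage: all of the substantive work --- non-vanishing, the $t$-motivic interpretation built from the deformation series $L(\mathfrak{s}; \boldsymbol{\xi})$, and the refined ABP criterion of Theorem \ref{c09thm} --- has already been absorbed into Theorem \ref{linindcmz}, and the corollary follows in one line.
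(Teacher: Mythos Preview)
Your proposal is correct and matches the paper's approach exactly: the paper simply states that this corollary is the $l=1$ case of Theorem \ref{linindcmz}, and you have spelled out precisely those details (non-vanishing via Theorem \ref{nonvan} to get a $k'$-linearly independent singleton, then Theorem \ref{linindcmz} to conclude $\overline{k}$-linear independence of $\{1,\zeta_A(\mathfrak{s};\boldsymbol{\xi})\}$).
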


Furthermore, the theorem enables us to obtain the linear independence result among the CMZVs and lift the $k'$-linear independence of CMZVs to $\overline{k}$-linear independence. We conclude with the following consequences for $\overline{\mathcal{CZ}}$ (resp. $\overline{\mathcal{CZ}}_w$), the $\overline{k}$-linear space generated by the CMZVs (resp. CMZVs of weight $w$). 

\begin{cor}\label{cordecom}
We have the following direct sum decomposition of $\overline{\mathcal{CZ}}$:
\begin{align*}
\overline{\mathcal{CZ}}=\overline{k}\oplus\bigoplus_{w\geq 1}\overline{\mathcal{CZ}}_w.
\end{align*}
\end{cor}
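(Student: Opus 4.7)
The plan is to deduce Corollary \ref{cordecom} directly from Theorem \ref{linindcmz}. Since $\overline{\mathcal{CZ}}$ is generated over $\overline{k}$ by CMZVs and every CMZV has a well-defined weight, grouping the generators by weight shows $\overline{\mathcal{CZ}} = \overline{k} + \sum_{w\geq 1} \overline{\mathcal{CZ}}_w$. The substantive content of the corollary is therefore the directness of this sum: for any $c_0 \in \overline{k}$, any distinct positive integers $w_1, \ldots, w_l$, and any $v_{w_i} \in \overline{\mathcal{CZ}}_{w_i}$, the identity $c_0 + v_{w_1} + \cdots + v_{w_l} = 0$ should force $c_0 = 0$ and $v_{w_i} = 0$ for each $i$.

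To prove this, I would argue by contradiction and use a $k'$-refinement step to bring the relation into the form required by Theorem \ref{linindcmz}. For each $i$, expand $v_{w_i} = \sum_{j} c_{ij}\eta_{ij}$ as a finite $\overline{k}$-linear combination of CMZVs $\eta_{ij}$ of weight $w_i$; these $\eta_{ij}$ are depth-one elements of $CZ_{w_i}$. Inside the finite set $\{\eta_{ij}\}_j$, choose a $k'$-maximal linearly independent subset $V_i \subset CZ_{w_i}$; then every $\eta_{ij}$ is a $k'$-linear combination of elements of $V_i$. Since $k' \subset \overline{k}$, this lets us rewrite $v_{w_i} = \sum_{\eta \in V_i} d_{i\eta}\,\eta$ with new coefficients $d_{i\eta} \in \overline{k}$. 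By construction each $V_i$ is a $k'$-linearly independent subset of $CZ_{w_i}$, exactly the hypothesis of Theorem \ref{linindcmz}.

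Substituting these refined expressions into the putative relation yields the $\overline{k}$-linear equation
\[
c_0 + \sum_{i=1}^{l}\sum_{\eta \in V_i} d_{i\eta}\,\eta = 0
\]
among the elements of $\{1\} \cup \bigcup_{i=1}^{l} V_i$. Theorem \ref{linindcmz} asserts that this set is $\overline{k}$-linearly independent, so every coefficient must vanish: $c_0 = 0$ and $d_{i\eta} = 0$ for all $i$ and all $\eta \in V_i$. The latter forces $v_{w_i} = 0$ for each $i$, completing the argument.

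The only nonformal ingredient of the plan is the $k'$-refinement step, and I expect it to be routine since it relies only on the inclusion $k' \subset \overline{k}$ and on the existence of a $k'$-maximal linearly independent subset of any finite family spanning the same $k'$-module; once this refinement is performed, Theorem \ref{linindcmz} supplies the required $\overline{k}$-linear independence and the direct sum decomposition follows immediately.
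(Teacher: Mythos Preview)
Your proposal is correct and follows the intended route: the paper states Corollary~\ref{cordecom} as an immediate consequence of Theorem~\ref{linindcmz} without giving a separate proof, and your argument---expressing each weight-$w_i$ component in terms of a $k'$-linearly independent subset $V_i\subset CZ_{w_i}$ and then invoking Theorem~\ref{linindcmz}---is exactly the natural way to make that deduction precise. One small terminological slip: when you say the $\eta_{ij}$ are ``depth-one elements of $CZ_{w_i}$'', you presumably mean degree-one monomials (i.e.\ single CMZVs), since in this setting ``depth'' already refers to $\dep(\mathfrak{s})=n$; the argument itself is unaffected.
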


\begin{cor}
We have the canonical bijective map
\begin{align*}
\overline{k}\otimes_{k'}\mathcal{CZ}\rightarrow \overline{\mathcal{CZ}}.
\end{align*}
\end{cor}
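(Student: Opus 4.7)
The plan is to verify surjectivity and injectivity of the canonical map $\alpha \otimes x \mapsto \alpha x$ separately, with Theorem \ref{linindcmz} providing the essential input on the injective side. Surjectivity is immediate since $\overline{\mathcal{CZ}}$ is by definition the $\overline{k}$-span of CMZVs and each CMZV $\zeta_A(\mathfrak{s};\boldsymbol{\xi})$ arises as the image of $1 \otimes \zeta_A(\mathfrak{s};\boldsymbol{\xi})$.

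For injectivity, I would produce an explicit $k'$-basis of $\mathcal{CZ}$ built out of CMZVs themselves. Let $\mathcal{CZ}_w$ denote the $k'$-span of CMZVs of weight $w$, and for each $w \in \mathbb{N}$ thin the set of weight-$w$ CMZVs to a $k'$-basis $B_w$ of $\mathcal{CZ}_w$. Each $B_w$ is then a $k'$-linearly independent subset of $CZ_w$, so Theorem \ref{linindcmz} applied across all weights simultaneously asserts that $\{1\} \cup \bigcup_{w} B_w$ is $\overline{k}$-linearly independent in $\overline{\mathcal{CZ}}$. A fortiori this set is $k'$-linearly independent in $\mathcal{CZ}$, which both upgrades the weight filtration to the decomposition $\mathcal{CZ} = k' \oplus \bigoplus_{w} \mathcal{CZ}_w$ and certifies $\{1\} \cup \bigcup_{w} B_w$ as a $k'$-basis of $\mathcal{CZ}$.

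With this basis in hand, every element $\xi \in \overline{k} \otimes_{k'} \mathcal{CZ}$ may be written uniquely as $\sum_i a_i \otimes b_i$ with $a_i \in \overline{k}$ and $b_i \in \{1\} \cup \bigcup_{w} B_w$. If $\xi$ lies in the kernel, then $\sum_i a_i b_i = 0$ in $\overline{\mathcal{CZ}}$, and the $\overline{k}$-linear independence of the $b_i$ forces every $a_i$ to vanish, so $\xi = 0$. Combined with surjectivity, this delivers the claimed bijection.

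There is no deep obstacle beyond cleanly combining Theorem \ref{linindcmz} with the extension-of-scalars formalism. The one technical point to verify is that the basis of each $\mathcal{CZ}_w$ can be chosen from the CMZVs themselves rather than from arbitrary $k'$-linear combinations, so that Theorem \ref{linindcmz} is directly applicable; this is immediate since $\mathcal{CZ}_w$ is by construction spanned over $k'$ by the CMZVs of weight $w$ and any generating set can be thinned to a basis.
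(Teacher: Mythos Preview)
Your argument is correct and is precisely the natural deduction from Theorem \ref{linindcmz}; the paper states this corollary without proof, and your write-up fills that gap cleanly. One cosmetic point: Theorem \ref{linindcmz} is literally stated for finitely many weights $w_1,\ldots,w_l$, whereas you invoke it ``across all weights simultaneously''; since any putative $\overline{k}$-linear dependence among the $b_i$ involves only finitely many of them, hence only finitely many weights, the reduction is immediate, but it would not hurt to say so explicitly.
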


\section*{Appendix}
In this section, we state the detailed proof of Theorem \ref{linindcmz}.

\subsection{Key Lemma}
\begin{lem}\label{laslem}
Let $V_l$ be a finite $\overline{k}$-linearly dependent subset of $CZ_{w}$. Then, $V_l$ is a finite $k'$-linearly dependent subset of $CZ_{w}$. 
\end{lem}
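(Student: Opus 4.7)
The plan is to follow the ABP-based strategy of Chang~\cite{C14} and~\cite{H21}, adapted to the CMZV setting. Write $V_l = \{v_1, \ldots, v_n\}$ with $v_j \in CZ_w$, and let $\sum_{j=1}^n \alpha_j v_j = 0$ be the given nontrivial $\overline{k}$-linear dependence. The goal is to produce a nontrivial $k'$-linear relation among the same $v_j$.

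First, assemble a single $t$-motivic datum encoding every $v_j$ at once. For each $v_j$ invoke Proposition~\ref{monocmzprop} to obtain a 4-tuple $(\Phi_j, \psi_j, v_j, r_j)$ satisfying conditions (i)--(iv) of Lemma~\ref{cmzproperty}. Set $r = \operatorname{lcm}(r_1, \ldots, r_n)$, promote each $\Phi_j$ to a $\sigma^r$-representation by composing with its Frobenius twists, and form the block-diagonal matrix $\Phi = \operatorname{diag}(\Phi_1, \ldots, \Phi_n)$ together with the stacked column $\psi = (\psi_1^{\mathrm{tr}}, \ldots, \psi_n^{\mathrm{tr}})^{\mathrm{tr}}$, so that $\psi^{(-r)} = \Phi\psi$ and $\det(\Phi)|_{t=0} \neq 0$. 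By condition (iii), the last coordinate of $\psi_j|_{t=\theta}$ is a specified nonzero scalar multiple of $v_j/\tilde{\pi}^w$; after rescaling the $\alpha_j$'s accordingly, the hypothesized dependence becomes an identity $(\rho\psi)|_{t=\theta} = 0$ for a row vector $\rho$ over $\overline{k}$ supported only on the ``last-of-block'' positions.

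Next, apply the refined ABP criterion (Theorem~\ref{c09thm}(i)) with $f = \theta$ to lift $\rho$ to a row vector $P \in \mathrm{Mat}_{1\times d}(\overline{k}[t])$ with $P|_{t=\theta} = \rho$ and the functional identity $P\psi = 0$ in $\mathbb{T}$. Choose $N \in r\mathbb{N}$ large enough that condition (iv) annihilates every non-last entry of each $\psi_j|_{t=\theta^{q^N}}$. Evaluating $P\psi = 0$ at $t = \theta^{q^N}$ leaves
\[
\sum_{j=1}^n P_j(\theta^{q^N})\, a_j c_j^N\bigl(b_j v_j/\tilde{\pi}^w\bigr)^{q^N} = 0,
\]
where $P_j \in \overline{k}[t]$ is the last-of-block component of $P$. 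Since $\overline{k}$ is perfect, extracting $q^N$-th roots and clearing the common factor $\tilde{\pi}^{-w}$ converts this into a $\overline{k}$-linear relation among the $v_j$ themselves, with explicit coefficients built from $P_j(\theta^{q^N})^{1/q^N}$, $a_j^{1/q^N}$, $b_j$ and $c_j$.

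The main obstacle, as anticipated in the outline preceding the lemma, is arranging that the resulting coefficients actually lie in $k' = \overline{\mathbb{F}}_q(\theta)$ rather than merely in $\overline{k}$, and showing that they are not all zero. Here I would follow the bookkeeping of \cite[proof of Theorem~5.1]{H21}: the scalars $a_j, c_j \in \overline{\mathbb{F}}_q^\times$ and $b_j \in k^\times$ delivered by Proposition~\ref{monocmzprop} already lie in $k'$, so the only delicate factor is $P_j(\theta^{q^N})^{1/q^N}$. A careful choice of $N$ in a cofinal subsequence adapted to the Galois orbits over $k'$ of the finitely many coefficients of $P$, together with the possibility of comparing relations produced at distinct Frobenius shifts $\theta^{q^{N'}}$ (exploiting that $P\psi = 0$ holds as a functional identity, not only at one point), forces the coefficients of the extracted relation into $k'$. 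Nontriviality follows because $P|_{t=\theta} = \rho \neq 0$ forces at least one $P_j$ to be a nonzero polynomial, hence nonzero at $t = \theta^{q^N}$ for all but finitely many $N$. The hard part will be precisely this rationality descent and its compatibility with the $\overline{\mathbb{F}}_q$-valued twists $\xi_j$ which distinguish the CMZV setting from the MZV case of \cite{C14}.
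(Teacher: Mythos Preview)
Your setup is correct and matches the paper: block-diagonal $\Phi$, stacked $\psi$, application of Theorem~\ref{c09thm} to lift the relation $\rho$ to a functional identity $P\psi=0$. However, your endgame has a genuine gap. Evaluating $P\psi=0$ at $t=\theta^{q^N}$ and extracting $q^N$-th roots gives a $\overline{k}$-linear relation among the $v_j$, and your proposed ``Galois orbit / comparison at multiple Frobenius shifts'' argument for forcing the coefficients into $k'$ is not a proof---it is a sketch of a hope, and you yourself flag it as the hard part. In particular, there is no mechanism in what you wrote that actually constrains $P_j(\theta^{q^N})^{1/q^N}$ to land in $k'$.

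The paper's route is different at exactly this point and supplies the missing idea. After obtaining ${\bf F}$ with ${\bf F}\psi=0$, one first \emph{normalizes}: assume $\dim_{\overline{k}}\Span_{\overline{k}}V_l=m-1$ so that $\{Z_2,\dots,Z_m\}$ is $\overline{k}$-independent, and divide ${\bf F}$ by the last-of-block entry $f_{1d_1}$ to get ${\bf F}'$ with that entry equal to $1$. Then one exploits the \emph{functional equation}: from ${\bf F}'\psi=0$ one also has ${\bf F}'^{(-r)}\Phi\psi=0$, hence $({\bf F}'-{\bf F}'^{(-r)}\Phi)\psi=0$. Because each $\Phi_j$ has last column $(0,\dots,0,1)^{\mathrm{tr}}$, the $d_1$-th entry of ${\bf F}'-{\bf F}'^{(-r)}\Phi$ is $1-1=0$, and the $(\sum_{i\le j}d_i)$-th entry is $f'_{jd_j}-f'^{(-r)}_{jd_j}$. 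If any of these were nonzero for $j\ge 2$, evaluating at $t=\theta^{q^N}$ (this is where your specialization step is actually used, but only inside a contradiction argument) would yield a nontrivial $\overline{k}$-relation among $Z_2,\dots,Z_m$, contradicting their independence. Hence $f'_{jd_j}=f'^{(-r)}_{jd_j}$, i.e.\ $f'_{jd_j}\in\mathbb{F}_{q^r}(t)\subset k'(t)$. Now evaluate ${\bf F}'\psi=0$ at $t=\theta$ (not at $\theta^{q^N}$): the surviving coefficients $f'_{jd_j}(\theta)\in\mathbb{F}_{q^r}(\theta)\subset k'$, and together with $a_j\in\overline{\mathbb{F}}_q^\times$, $b_j\in k^\times$ this gives the desired nontrivial $k'$-linear relation. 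The normalization-plus-Frobenius-fixedness argument is the key step you are missing.
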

\begin{proof}
We put $V_l=\{Z_{1}, \ldots, Z_{m}\}$. Without loss of generality, we may assume that $m\geq 2$ by Theorem \ref{nonvan} and 
\begin{align}\label{dim}
	{\rm dim}_{\overline{k}}{\rm Span}_{ \overline{k} }\{ V_l \}=m-1.
\end{align}
Again, we may assume that $Z_{1}\in{\rm Span}_{\overline{k}}\{  Z_{2}, \ldots, Z_{m}  \}$; then, by assumption \eqref{dim}, $\{ Z_{2}, \ldots, Z_{m} \}  $ is a linearly independent set over $\overline{k}$. 
By Proposition \ref{monocmzprop}, we can take the matrix $\Phi_{j}$ and the column vector $\psi_{j}$ $(1\leq j\leq m)$ so that the 4-tuple $(\Phi_{j}, \psi_{j}, Z_{j}, r_j)$ satisfies Lemma \ref{cmzproperty} (i)-(iv). We define the block diagonal matrix $\Phi$ and column vector $\psi$ as follows. 
\[
	\Phi:=\bigoplus_{j=1}^{m}\Phi_{j}\quad \text{and}\quad \psi:=\bigoplus_{j=1}^{m}\psi_{j}. 
\]
In the above, we again define the direct sum of column vectors ${\bf v}_1, \ldots, {\bf v_m}$ by $\bigoplus^{m}_{i=1}{\bf v}_i:=({\bf v}_1^{\rm tr}, \ldots, {\bf v}_m^{\rm tr})^{\rm tr}$. 
By Lemma \ref{cmzproperty}, we have 
\begin{align}\label{psiw1}
                         \psi|_{t=\theta}=\bigoplus_{j=1}^{m}\Biggl(\frac{1}{\tilde{\pi}^{w}}, \ldots, a_j\frac{b_jZ_{j}}{\tilde{\pi}^{w}}\Biggr)^{\rm tr}
\end{align}
for some $a_j\in\overline{\mathbb{F}}_q^{\times}$, $b_j\in k^{\times}$ and  
 \begin{align}\label{psinw1}
                         \psi|_{t=\theta^{q^N}}=\bigoplus_{j=1}^{m}\Biggl(0, \ldots, 0,  a_jc_j^{N}\Bigl(\frac{b_jZ_{j}}{\tilde{\pi}^{w}}\Bigr)^{q^{N}}\Biggr)^{\rm tr}
\end{align}
for some $c_j\in \mathbb{F}_q^{\times}$, $N\in\mathbb{N}$.
By using Theorem \ref{c09thm}, there exist row vectors ${\bf f}_j=({f}_{j1}, \ldots, {f}_{jd_{j}})\in{\rm Mat}_{1\times d_{j}}(\overline{k}[t])$ ($j = 1, \ldots, m$) so that if we put ${\bf F}=({\bf f}_1, \ldots, {\bf f}_{m})$, then we have
\[
	{\bf F}\psi=0, \\ \text{ ${f}_{1d_{1}}|_{t=\theta}=1$  and  ${f}_{ji}|_{t=\theta}=0$ for $1\leq i <d_{j}$}. 
\] 
Since ${f}_{1, d_{1}}|_{t=\theta}$ is the coefficient of $a_1b_1Z_{1}/(\tilde{\pi}^{w})$ in $({\bf F}\psi)|_{t=\theta}=0$ and by the assumption \eqref{dim}, $Z_{1}$ is expressed by nontrivial $\overline{k}$-linear combinations of $Z_{2}, \ldots, Z_{m}$.
We write ${\bf F}':=(1/{f}_{1d_{1}}){\bf F}$ and $d:=\sum_{j=1}^{m}d_{j}$. Note that the vector ${\bf F}'$ is of the form 
\[
	{\bf F}'=({f}'_{11}, \ldots, {f}'_{1d_{1}}, \ldots, {f}'_{m1}, \ldots, {f}'_{md_{m}} )\in{\rm Mat}_{1\times d}(\overline{k}(t))
\]
where ${f}'_{1d_{1}}=1$. We have the following from ${\bf F}\psi=0$ and ${f}_{ji}|_{t=\theta}=0$ for $1\leq i <d_{j}$: 
\begin{align}\label{qpsi0}
	{\bf F}'\psi=0\ \text{and}\ {f}'_{ji}|_{t=\theta}=0\ \text{ for all $1\leq i< d_{j}$}.
\end{align}

By using Lemma \ref{cmzproperty}, for $r={\rm lcm}(r_1, \ldots, r_m)$ we obtain ${\bf F}'^{(-r)}\Phi\psi=({\bf F}'\psi)^{(-r)}=0$ and thus
\begin{align}\label{eqnQ}
	{\bf F}'\psi-{\bf F}'^{(-r)}\Phi\psi=({\bf F}'-{\bf F}'^{(-r)}\Phi)\psi=0.
\end{align}
The last column of the matrix $\Phi_{j}$ is $(0, \ldots, 0, 1)^{\rm tr}$ for each $j$, and consequently, the $d_{1}$-th entry of row vector ${\bf F'}-{\bf F'}^{(-r)}\Phi$ is zero since the $d_{1}$-th entry of the row vectors ${\bf F'}$ and ${\bf F'}^{(-r)}\Phi$ are 1. 
The $\sum_{i=1}^{j}d_{i}$-th column of $\Phi$ is 
\begin{align*}
\begin{array}{rcll}
\ldelim( {7}{4pt}[] & 0 & \rdelim) {7}{4pt}[] & \rdelim\}{4}{10pt}[$\sum_{i=1}^{j}d_{i}$] \\
& \vdots & & \\
& 0 & & \\
& 1 & &  \\
& 0 & & \\
& \vdots & & \\
&0 & &.
\end{array}
\end{align*}
Then, the $\sum_{i=1}^{j}d_{i}$-th entry of the row vector ${\bf F}'^{(-r)}\Phi$ is ${f}'^{(-r)}_{jd_{j}}$. Thus, the $\sum_{i=1}^{j}d_{i}$-th entry of ${\bf F}'-{\bf F}'^{(-r)}\Phi$ is written as follows:
\[ 
	{f}'_{jd_{j}}-{f}'^{(-r)}_{jd_{j}}\quad \text{ for $j=1, \ldots, m$.} 
\]
We claim that ${f}'_{jd_{j}}-{f}'^{(-r)}_{jd_{j}}=0$ for $j=2, \ldots, m$. 
Indeed, if there exists some $2\leq j\leq m$ such that ${f}'_{jd_{j}}-{f}'^{(-r)}_{jd_{j}}\neq 0$, we can derive the contradiction in the following way: 

Let us take a sufficiently large $N\in\mathbb{N}$ so that $({f}'_{jd_{j}}-{f}'^{(-r)}_{jd_{j}})|_{t=\theta^{q^N}}\neq 0$ and all entries of $({\bf F}'-{\bf F'}^{(-r)}\Phi)$ are regular at $t=\theta^{q^{N}} $. By using \eqref{psinw1} and substituting $t=\theta^{q^N}$ in \eqref{eqnQ}, we obtain
\begin{align*}
	\Bigl\{({\bf F}'-{\bf F}'^{(-r)}\Phi)\psi\Bigl\}|_{t=\theta^{q^N}}&=\Bigl\{({\bf F}'-{\bf F}'^{(-r)}\Phi)\Bigr\}|_{t=\theta^{q^N}}\bigoplus^{m}_{j=1}\Biggl(0, \ldots, 0, a_jc_j^{N}\Bigl(\frac{b_jZ_{j}}{\tilde{\pi}^{w}}\Bigr)^{q^N}\Biggr)^{\rm tr}\\
	&=\sum_{j=1}^{m}({f}'_{jd_{j}}-{f}'^{(-r)}_{jd_{j}})|_{t=\theta^{q^N}}a_jc_j^{N}\Bigl(\frac{b_jZ_{j}}{\tilde{\pi}^{w}}\Bigr)^{q^N}\\
	&=\frac{1}{\tilde{\pi}^{w}}\sum_{j=1}^{m}({f}'_{jd_{j}}-{f}'^{(-r)}_{jd_{j}})|_{t=\theta^{q^N}}a_jc_j^{N}\Bigl(b_jZ_{j}\Bigr)^{q^N}=0.
\end{align*}
Thus, combined with ${f}'_{1d_{1}}-{f}'^{(-r)}_{1d_{1}}=1-1=0$, we obtain the nontrivial $\overline{k}$-linear relations among $Z_{2}^{q^N}, \ldots, Z_{m}^{q^N}$ as follows: 
 \[
 	\sum_{j=2}^{m}({f}'_{jd_{j}}-{f}'^{(-r)}_{jd_{j}})|_{t=\theta^{q^N}}a_jc_j^{N}\Bigl(b_jZ_{j}\Bigr)^{q^N}=0.
 \]
 Then, by taking the $q^{N}$th root of the relation, we obtain the following nontrivial $\overline{k}$-linear relation among $Z_{2}, \ldots, Z_{m}$:
 \[
 	\sum_{j=2}^{m}\Bigl\{({f}'_{jd_{j}}-{f}'^{(-r)}_{jd_{j}})|_{t=\theta^{q^N}}a_jc_j^{N}\Bigr\}^{\frac{1}{q^N}}b_jZ_{j}=0.
 \]
 This contradicts our assumption that $\{ Z_{2}, \ldots, Z_{m} \}$ is a $\overline{k}$-linearly independent set. 

Therefore, we obtain ${f}'_{jd_{j}}-{f}'^{(-r)}_{jd_{j}}=0$ for $j=2, \ldots, m$, and this equation shows the following:
 \begin{align}\label{coefrat}
 {f}'_{jd_{j}}\in \mathbb{F}_{q^r}(t) \quad (j=2, \ldots, m).
 \end{align} 
 By substituting $t=\theta$ in equation ${\bf F'}\psi=0$, equations \eqref{psiw1} and \eqref{qpsi0} enable us to obtain the following equalities:
 \begin{align*}
 	({\bf F'}\psi)|_{t=\theta}&=\bigl(0, \ldots, 0, {f}'_{1d_{1}}(\theta), \ldots, 0, \ldots, 0, {f}'_{md_{m}}(\theta)\bigr)\bigoplus_{j=1}^{m}\Biggl(\frac{1}{\tilde{\pi}^{w}}, \ldots, a_j\frac{b_jZ_{j}}{\tilde{\pi}^{w}}\Biggr)^{\rm tr}\\
 					      &=\sum_{j=1}^{m}({f}'_{jd_{j}}|_{t=\theta})a_j\frac{b_jZ_{j}}{\tilde{\pi}^{w}}=\frac{1}{\tilde{\pi}^{w}}\sum_{j=1}^{m}({f}'_{jd_{j}}|_{t=\theta})a_jb_jZ_{j}=0.
 \end{align*}
By ${f}'_{1d_{1}}=1$ and \eqref{coefrat}, we have the following nontrivial $k$-linear relation among $Z_{1}, \ldots, Z_{m}$:
\[
	\sum_{j=1}^{m}({f}'_{jd_{j}}|_{t=\theta})a_jb_jZ_{j}=0.
\]
Therefore, we obtain the desired claim. 
\end{proof}

\subsection{Proof of Theorem \ref{linindcmz}}
\begin{proof}
We may assume that $w_l>\cdots>w_1$ without loss of generality. By definition,
$CZ_{w_i}$ is a finite set for each $i=1, \ldots, l$, and thus, its subset $V_{i}$ is also finite. Let each $V_{i}$ consist of $\{ Z_{i1}, \ldots, Z_{im_i}  \}$ where $Z_{ij}\in CZ_{w_i}\ (j=1, \ldots, m_i)$ are the same total weight $w_i$. 
The proof is by induction on $l$.

We require, on the contrary, that $\{1\}\cup\bigcup_{i=1}^lV_i$ is an $\overline{k}$-linearly dependent set 
and then proceed to our proof by assuming the existence of nontrivial $\overline{k}$-linear relations involving $V_l$. 

For $1\leq i\leq l$ and $1\leq j\leq m_l$, by combining Theorem \ref{rat} with Proposition \ref{monocmzprop}, the matrices 
\begin{align}\label{mats}
    \Phi_{ij}\in{\rm Mat}_{d_{ij}}(\overline{k}[t])\quad {\rm and}\quad \psi_{ij}\in{\rm Mat}_{d_{ij}\times 1}({\mathbb E})
\end{align}
so that $d_{ij}\geq 2$ and each $(\Phi_{ij}, \psi_{ij}, Z_{ij}, r_{ij})$ satisfies Lemma \ref{cmzproperty}.

For matrix $\Phi_{ij}$ and column vector $\psi_{ij}$, we define the following block diagonal matrix and column vector:
\[
    \tilde{\Phi}:=\bigoplus_{i=1}^{l}\biggl( \bigoplus^{m_i}_{j=1}(t-\theta)^{w_l-w_i}\Phi_{ij} \biggr)\quad \text{and}\quad \tilde{\psi}:=\bigoplus^{l}_{i=1}\biggl( \bigoplus^{m_i}_{j=1}\Omega^{w_l-w_i}\psi_{ij} \biggr).
\]
In this proof, we define the direct sums of any matrices $M_1, \ldots, M_m$ and any column vectors ${\bf v}_1, \ldots, {\bf v_m}$ whose entries belong to $\mathbb{C}_{\infty}((t))$ by 

\begin{align*}
\bigoplus^{m}_{i=1}M_i:=
\begin{array}{rccccll}
\ldelim({4.0}{4pt}[] 
& M_1 &  &  &  &\rdelim){4.0}{4pt}[] &\\
&  & M_2 &  &  & &\\
&  &  & \ddots &  &  &\\
&  &  &  &  M_m & &\\
\end{array}
\end{align*}
and
\[
\bigoplus^{m}_{i=1}{\bf v}_i:=({\bf v}_1^{\rm tr}, \ldots, {\bf v}_m^{\rm tr})^{\rm tr}
\]
respectively. Here, ``tr'' stands for the transpose of a vector.

By the requirement, $\{ 1 \}\cup\bigcup_{i=1}^l V_i$ is a linearly dependent set over $\overline{k}$. Thus, there exists a nonzero vector
\[
	\rho=({\bf v}_{11}, \ldots, {\bf v}_{1m_1}, \ldots, {\bf v}_{l1}, \ldots, {\bf v}_{lm_l}) 
\] 
 such that 
\begin{align*}
 \rho\cdot(\tilde{\psi}|_{t=\theta})&=\rho\cdot\bigoplus_{i=1}^{l}\bigoplus^{m_i}_{j=1}\Biggl(\frac{1}{\tilde{\pi}^{w_l}}, \ldots, a_{ij}\frac{b_{ij}Z_{ij}}{\tilde{\pi}^{w_l}}\Biggr)^{\rm tr}\\
 &=\frac{1}{\tilde{\pi}^{w_l}}({\bf v}_{11}, \ldots, {\bf v}_{1m_1}, \ldots, {\bf v}_{l1}, \ldots, {\bf v}_{lm_l}) \bigoplus^{l}_{i=1}\bigoplus^{m_i}_{j=1}\Biggl(1, \ldots, a_{ij}b_{ij}Z_{ij}\Biggr)^{\rm tr}=0, 
\end{align*}
 where ${\bf v}_{ij}\in {\rm Mat}_{1\times d_{ij}}(\overline{k})$ for $1\leq i\leq l$ and $1\leq j\leq m_i$. Then, we have the following nontrivial $\overline{k}$-linear relation: 
 \[
 	({\bf v}_{11}, \ldots, {\bf v}_{1m_1}, \ldots, {\bf v}_{l1}, \ldots, {\bf v}_{lm_l}) \bigoplus^{l}_{i=1}\bigoplus^{m_i}_{j=1}\Biggl(1, \ldots, a_{ij}b_{ij}Z_{ij}\Biggr)^{\rm tr}=0.
 \]
 In the beginning of this proof, we assumed that there exist nontrivial $\overline{k}$-linear relations between $V_l$ and $\{1 \}\cup\bigcup_{i=1}^{l-1}V_i$, and then for some $1\leq s\leq m_l$, the last entry of ${\bf v}_{ls}$ is nonzero. The last entry in ${\bf v}_{li}$ is a coefficient of $a_{li}b_{li}Z_{li}$ for $1\leq i\leq m_l$ in the above relation. By using Theorem \ref{c09thm}, we have ${\bf F}:=({\bf f}_{11}, \ldots, {\bf f}_{1m_1}, \ldots, {\bf f}_{l1}, \ldots, {\bf f}_{lm_l})$ where ${\bf f}_{ij}=(f_{i1}, \ldots, f_{id_{ij}})\in{\rm Mat}_{1\times d_{ij}}(\overline{k}[t])$ for $1\leq i\leq l$, $1\leq j\leq m_i$ and satisfies
 \[
 	{\bf F}\tilde{\psi}=0\quad \text{and}\quad {\bf F}|_{t=\theta}=\rho. 
 \] 
 The last entry of ${\bf f}_{ls}$ is a nontrivial polynomial because the last entry of ${\bf v}_{ls}$ is not zero. We choose a sufficiently large $N\in\mathbb{Z}$ so that ${\bf f}_{ls}|_{t=\theta^{q^N}}\neq 0$ and $\boldsymbol{\xi}^{q^N}=\boldsymbol{\xi}$ for all factors $\zeta_A(\mathfrak{s};\boldsymbol{\xi})$ appear in all $Z_{ij}$ $(1\leq i\leq l, 1\leq j\leq m_i)$. 
We rewrite equation $({\bf F}\tilde{\psi})|_{t=\theta^{q^N}}=0$ by using $\Omega|_{t=\theta^{q^N}}=0$, Lemma \ref{cmzproperty} (iv) and the definition of $\tilde{\psi}$ as follows:
\begin{align*}
	&({\bf F}\tilde{\psi})|_{t=\theta^{q^N}}\\
	&=({\bf f}_{11}, \ldots, {\bf f}_{1m_1}, \ldots, {\bf f}_{l1}, \ldots, {\bf f}_{lm_l})|_{t=\theta^{q^N}}\bigoplus^{l}_{i=1}\bigoplus^{m_i}_{j=1}\Omega^{w_l-w_i}|_{t=\theta^{q^N}}\Biggl(0, \ldots, 0, a_{ij}c_{ij}^{N}\Bigl(\frac{b_{ij}Z_{ij}}{\tilde{\pi}^{w_i}}\Bigr)^{q^N}\Biggr)^{\rm tr}\\
	\intertext{when $i\neq l$, $\Omega^{w_l-w_i}|_{t=\theta^{q^N}}=0$ and thus}
	&=({\bf f}_{l1}, {\bf f}_{l2}, \ldots, {\bf f}_{lm_l})|_{t=\theta^{q^N}}\bigoplus^{m_l}_{j=1}\Biggl(0, \ldots, 0, a_{lj}c_{lj}^{N}\Bigl(\frac{b_{lj}Z_{lj}}{\tilde{\pi}^{w_l}}\Bigr)^{q^N}\Biggr)^{\rm tr}\\
	&=\sum_{j=1}^{m_l}(f_{ld_{lj}}|_{t=\theta^{q^N}})a_{lj}c_{lj}^{N}\Bigl(\frac{b_{lj}Z_{lj}}{\tilde{\pi}^{w_l}}\Bigr)^{q^N}=0.
\end{align*}
Thus, we obtain the following nontrivial $\overline{k}$-linear relation with some $f_{ld_{ls}}\neq0$:
\[
	\sum_{j=1}^{m_l}(f_{ld_{lj}}|_{t=\theta^{q^N}})a_{lj}c_{lj}^{N}\Bigl(b_{lj}Z_{lj}\Bigr)^{q^N}=0.
\] 
 Therefore, by taking the $q^N$th root of the above $\overline{k}$-linear relation, we obtain a nontrivial relation for $\{  Z_{l1}, \ldots, Z_{lm_l}  \}$ as follows. 
\[
	\sum_{j=1}^{m_l}\Bigl\{(f_{ld_{lj}}|_{t=\theta^{q^N}})a_{lj}c_{lj}^{N}\Bigr\}^{\frac{1}{q^N}}b_{lj}Z_{lj}=0.
\] 
This shows that $V_l$ is a $\overline{k}$-linearly dependent set.  
Then, by using Lemma \ref{laslem}, we show that $V_l$ is a $k'$-linearly dependent subset.
However, it contradicts the condition that $V_l$ is the $k'$-linearly independent set. Therefore, we finish the proof. 
\end{proof}

\section*{Acknowledgements}

The author gratefully acknowledges Professor Dinesh Thakur for informing him the definition of CMZVs and Professor Chieh-Yu Chang for providing him fruitful comments about the project. The author also thanks the JSPS Research Fellowships, JSPS Overseas Research Fellowships and the National Center for Theoretical Sciences in Hsinchu for their support during the research project. This work is also supported by JSPS KAKENHI Grant Number JP22J00006.

\end{document}